\let\cal=\mathcal      
\def\mcc{M\raise.5ex\hbox{c}C}
\def\mccarthy{M\raise.5ex\hbox{c}Carthy}
\def\h{{\cal H}}
\def\K{{\cal K}}
\def\M{{\cal M}}
\def\m{Mult}
\let\i=\infty
\def\la{\langle}
\def\ra{\rangle}
\def\={\ = \ }
\def\C{\mathbb C}
\def\D{\mathbb D}
\def\be{\setcounter{equation}{\value{theorem}} \begin{equation}}
\def\ee{\end{equation} \addtocounter{theorem}{1}}
\def\beq{\begin{eqnarray*}}
\def\eeq{\end{eqnarray*}}
\def\se{\setcounter{equation}{\value{theorem}}} 
\def\att{\addtocounter{theorem}{1}}
\def\vs{\vskip 5pt}
\def\bp{{\sc Proof: }}
\def\ep{{}{\hfill $\Box$} \vskip 5pt \par}
\def\bl{\begin{lemma}}
\def\el{\end{lemma}}
\def\bt{\begin{theorem}}
\def\et{\end{theorem}}
\def\bprop{\begin{prop}}
\def\eprop{\end{prop}}
\def\bd{\begin{definition}}
\def\ed{\end{definition}}
\def\br{\begin{remark}}
\def\er{\end{remark}}
\def\bexer{\begin{exercise}}
\def\eexer{\end{exercise}}
\newtheorem{theorem}{Theorem}[section]
\newtheorem{lemma}[theorem]{Lemma}
\newtheorem{corollary}[theorem]{Corollary}
\newtheorem{definition}[theorem]{Definition}
\definecolor{dark_purple}{rgb}{0.4, 0.0, 0.4}
\definecolor{dark_green}{rgb}{0.0, 0.7, 0.0}
\newtheorem{defin}[theorem]{Definition}
\newtheorem{prop}[theorem]{Proposition}
\newtheorem{lem}[theorem]{Lemma}
\numberwithin{equation}{section}
\title{Beurling's theorem for the Hardy operator on $L^2[0,1]$}
\author{Jim Agler
and
John E. M\raise.5ex\hbox{c}Carthy
\thanks{Partially supported by National Science Foundation Grant  
DMS 2054199}
}
\date{\today}
\newcommand{\mc}{M\raise.45ex\hbox{c}Carthy}
\renewcommand\Re{\mathrm {Re~}}
\def\norm#1{\| #1 \|}
\newcommand\s{{\mathbb S}}
\newcommand\ltwo{L^2}
\renewcommand\m{{\mathcal M}}
\newcommand\n{{\mathcal N}}
\begin{document}

\bibliographystyle{plain}

\maketitle

\begin{abstract}
We prove that the invariant subspaces of the Hardy operator on $L^2[0,1]$ are the spaces that are limits of sequences of finite dimensional spaces spanned by monomial functions.
\end{abstract}

\section{Introduction}
\label{seca}

The space $L^2[0,1]$ is a cornerstone of analysis. 
One way to analyze it is to use the exponential functions $e^{itx}$, which have the advantage of being eigenfunctions for differentiation.
 Another way is to use the monomial functions $x^s$. The M\"untz-Sz\'asz theorem gives necessary and sufficient conditions for a collection of monomial functions to span $L^2[0,1]$. 
Monomials are eigenfunctions for the Hardy operator $H$, 
 defined by
\[
H f(x) \= \frac{1}{x} \int_0^x f(t) dt .
\]
Conversely, if $T$ is a bounded linear operator on $L^2[0,1]$ that has $x^s$ as an eigenvector whenever $x^s$ is in $L^2[0,1]$, then $T$ is a function of $H$; specifically, it is of the form $\phi(H)$ for some function $\phi$ that is bounded and analytic on the disk $\D(1,1) = \{ z \in \C : |z-1| < 1 \}$ \cite{amMO}.

We shall use $L^2$ to denote $L^2[0,1]$ throughout. 
Hardy proved in \cite{har20} that $H$ is bounded on $L^2$  (and indeed on $L^p$ for all $p > 1$).
For a treatment of $H$ consult the book \cite{kmp07}.
What are its invariant subspaces?

Let $\s$ denote the half plane $\{ s \in \C : \Re(s) > - \frac12 \}$. Then if $s \in \s$, the monomial function $x^s$ is in $L^2$,
and $H x^s \= \frac{1}{s+1} x^s$; moreover the monomials constitute all the eigenvectors of $H$. Any space that is
the linear span of finitely many monomial functions is invariant for $H$. We shall call such a space a finite monomial space.
It is the object of this note to prove that every invariant subspace of $H$ is a limit of  finite monomial spaces. 

The Hardy operator is unitarily equivalent to $1- S^*$, where $S$ is the unilateral shift \cite{bhs65}. Its invariant subspaces are therefore described by the celebrated theorem of Beurling \cite{beu} which described the invariant subspaces of the shift using the beautiful theory of Hardy spaces of holomorphic functions. Using this theory, Theorem \ref{thmis} below is well-known. It is proved as the Theorem on Finite Dimensional Approximation \cite[p.37]{nik}. However, the point of this note is to describe the invariant subspaces of $H$ without using any Hardy space theory, just using $L^2$ techniques and functional analysis.
Our hope is that this approach will not only illuminate $L^2$ with a new light, but may also generalize to related spaces, such as $L^p$ or weighted $L^p$ spaces.

\begin{defin}
For $S$ a finite subset of $\s$ we let $\m(S)$ denote the span in $\ltwo$ of the monomials whose exponents lie in $S$, i.e.,
\[
\m(S)=\{\sum_{s\in S}a(s)x^s\  |\ a:S\to \C \}.
\]
We refer to sets in $\ltwo$ that have the form $\m(S)$ for some finite subset $S$ of $\s$ as \emph{finite monomial spaces}.\end{defin}

\begin{defin}
\label{defin1}
If $\m$ is a subspace of a Hilbert space $\h$  and $\{\m_n\}$ is a sequence of closed subspaces, we say that \emph{$\{\m_n\}$ tends to $\m$} and write
\[
\m_n \to \m\ \text{ as }\ n\to \infty
\]
if
\[
\m = \{f \in \h\ | \ \lim_{n \to \infty} {\rm dist}(f,\m_n)=0\}.
\]
\end{defin}

\begin{defin}
We say that a subspace $\m$ of $\ltwo$ is a \emph{monomial space} if there  exists a sequence $\{\m_n\}$ of finite monomial spaces such that $\m_n \to \m$.
\end{defin}
Equipped with these definitions, we can now state our main theorem.
\bt
\label{thmis}
Let $\m$ be a closed non-zero subspace of $\ltwo$. Then $\m$ is invariant for $H$ if and only if $\m$ is a monomial space.
\et
One way to construct a monomial space is to take the closed linear span of an infinite set of monomial functions,
\be
\label{eqa1}
\m \= \vee \{ x^{s_k} : k \in {\mathbb N} \}.
\ee
The M\"untz-Sz\'asz  theorem (proved in \cite{mu14,sza16} for integer exponents, and in \cite{sza53} for general real exponents)
characterizes when such a space is a proper subspace of $\ltwo$. See \cite{boer95} for a thorough treatment.
\bt (M\"untz-Sz\'asz)
\label{thmms}
\[
 \vee \{ x^{s_k} : k \in {\mathbb N}  \} \= \ltwo\ \ \  \text{ if and only if }\ \ \ \sum_k \frac{2 \Re s_k +1}{|s_k+ 1|^2 } =\infty.
\]
\et
Not every monomial space looks like \eqref{eqa1}. It is easy to see that for any $0 < s < 1$, the space
$\{ f \in \ltwo : f = 0 {\rm\ a.e.\ on\ } [0,s] \}$ is invariant for $H$, and hence is a monomial space. (For an explicit construction of finite monomial spaces that converge to this subspace, see \cite{amHI}.)

Our goal is to give a real analysis proof of  Theorem \ref{thmis}.
To do this, we first need some preliminary results.
In Section \ref{secb} we state two theorems about Hilbert spaces that we will use. The first, due to von Neumann in 1929, describes
isometries on a Hilbert space. The second, due to Quiggin in 1993, gives a sufficient condition to extend partially defined multipliers of a reproducing kernel Hilbert space without increasing the norm.
We apply Quiggin's theorem to the commutant of the Hardy operator in Section \ref{secd}.

In Section \ref{secc} we describe the Laguerre basis for $L^2$, the basis obtained by evaluating the
Laguerre polynomials on $\log \frac{1}{x}$, which are also the functions obtained by applying
$(1-H^*)^n$ to the constant function $1$.
In section \ref{sece} we deal with multiplicity;
this corresponds to generalizing the notion of finite monomial space to allow not just monomials $x^s$, but
also functions of the form $(\log x)^m x^s$.
In Section \ref{seccyc} we prove that certain rational functions are cyclic for $H^*$.
Finally  in Section \ref{secg} we prove  Theorem \ref{thmis}. Our strategy to prove that an invariant subspace $\m$ of $H$ is a monomial space is to look at the projection $\eta$  of the constant function $1$ onto
$\m^\perp$, and show that the function  $\eta$ uniquely characterizes $\m$. We then approximate
$\eta$ by functions that arise in a similar way from finite monomial spaces, and show that this 
proves that the finite monomial spaces coverge to $\m$ in the sense of Definition \ref{defin1}.

\section{Some results from operator theory}
\label{secb}

An operator $V$ defined on a Hilbert space $\h$ is called an isometry if it preserves norms; a co-isometry is 
the adjoint of an isometry. An isometry $V$ is called pure if $\cap_{n=0}^\infty {\rm ran} (V^*)^n = 0$.
The von Neumann-Wold decomposition describes the structure of isometeries \cite{vn29, wo38}.
We state it not in its most general form, but in a way that will be useful below.
\bt
\label{thmvnw} (von Neumann - Wold)

(i) Every isometry is the direct sum of a unitary operator and a pure isometry.

(ii) If $V$ is a pure isometry on the space $\h$, and $\M = {\rm ker} \ V^*$, then $\h = \vee \{ T^j m : m \in \M\}$.
The dimsnsion of $\M$ is called the multiplicity of $V$.

(iii) If $V$ is a pure isometry of multiplicity $1$ and $f$ is any non-zero vector in $\h$ then
\[
\h \= \vee \{ V^i f, (V^*)^j f : i,j \geq 0 \} .
\]
\et

We shall also need a result on extending the adjoints of multiplication operators, due to Quiggin 
\cite{qui93}. We say that a sesquilinear form $\ell(x,y)$ has one positive square if for any finite set of points
$\{ x_1, \dots, x_N\}$, the self-adjoint $N$-by-$N$ matrix $\ell(x_i,x_j)$ has one positive eigenvalue.
\bt (Quiggin):
\label{thmmq}
Let $(\h,k)$ be a reproducing kernel Hilbert space on a set $X$.
A  sufficient condition that every bounded operator $T$ defined on
$\vee \{ k_x : x \in X_0\}$ for some subset $X_0 \subseteq X$ that has the form
\[
T k_x  \= \alpha(x) k_x, \qquad x \in X_0 \]
extend to a bounded operator $\widetilde{T} : \h \to \h$ that
has the form
\[
\widetilde{T} k_x  \= \widetilde{\alpha}(x) k_x, \qquad x \in X_0 \]
and satisfies $\| \widetilde{T} \| = \| T \|$ is that the reciprocal $\frac{1}{k(x,y)}$ has exactly one positive square.
\et
In the form stated, the converse to Quiggin's theorem is not true. However, if one requires norm-preserving extensions in the vector-valued case too, then the condition that $\frac{1}{k(x,y)}$ has  one positive square is both necessary and sufficient.
This was proved by McCullough \cite{mccul94} in a different context, and put in a unified context in \cite{agmc_cnp}. See also
the paper by Knese \cite{kn20} for an elegant proof of necessity,
 and \cite{ampi}  for a discussion in a book.

\section{The Laguerre basis for $\ltwo$}
\label{secc}

The following identity is a special case of one in \cite{kmp00}.
In our case, it is easily proved by checking on polynomials; see e.g. \cite{amHI}.
\begin{lemma}
\label{lemon1}
Let $f \in \ltwo$. Then
\[
\norm{f}^2 = \norm{(1-H)f}^2 + |\int_0^1 f(x) dx|^2
\]
\end{lemma}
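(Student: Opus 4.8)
The plan is to reduce the identity to an elementary computation on monomials and then invoke density. Writing $P$ for the rank-one operator $Pf = \left(\int_0^1 f\right)\cdot 1$, the claim is the diagonal value of the sesquilinear identity
\[
\ip{(1-H)f}{(1-H)g} + \ip{Pf}{g} \= \ip{f}{g},
\]
equivalently the operator identity $(1-H)^*(1-H) = I - P$. Since $H$ is bounded on $\ltwo$ by Hardy's theorem \cite{har20} and $P$ is bounded by Cauchy--Schwarz, both sides above are bounded sesquilinear forms on $\ltwo \times \ltwo$; since the polynomials are dense in $\ltwo$, it suffices to check the identity for $f = x^m$ and $g = x^n$ with $m,n$ nonnegative integers. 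By sesquilinearity it then holds on all polynomials and hence, by density and boundedness, on all of $\ltwo$; putting $f = g$ gives the Lemma.

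For the monomial step I would use the three elementary facts $H x^s = \frac{1}{s+1}x^s$, $\int_0^1 x^s\,dx = \frac{1}{s+1}$, and $\ip{x^m}{x^n} = \frac{1}{m+n+1}$. The first gives $(1-H)x^m = \frac{m}{m+1}x^m$, so that
\[
\ip{(1-H)x^m}{(1-H)x^n} + \ip{x^m}{1}\,\ip{1}{x^n} \= \frac{mn}{(m+1)(n+1)(m+n+1)} + \frac{1}{(m+1)(n+1)} ,
\]
and clearing denominators shows the right-hand side equals $\frac{1}{m+n+1} = \ip{x^m}{x^n}$ precisely because $mn + (m+n+1) = (m+1)(n+1)$. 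That is the entire content of the lemma.

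There is essentially no serious obstacle; the only points that need care are (a) confirming that both sides really do extend to bounded forms, so that the density argument is legitimate — this is exactly where boundedness of $H$ enters — and (b) getting the little algebraic identity right. A more hands-on alternative, which I would mention but not present, avoids density altogether: compute $H^* g(t) = \int_t^1 \frac{g(x)}{x}\,dx$ by Fubini and verify $(1-H^*)(1-H) = I - P$ directly via an integration-by-parts calculation; the monomial verification is shorter, and is the route taken in \cite{amHI}.
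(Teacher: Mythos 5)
Your proof is correct and is essentially the paper's own argument: the paper simply remarks that the identity "is easily proved by checking on polynomials," and you have carried out exactly that verification (the monomial computation $\frac{mn}{(m+1)(n+1)(m+n+1)}+\frac{1}{(m+1)(n+1)}=\frac{1}{m+n+1}$ is right, and the density/boundedness step is the standard justification). Nothing further is needed.
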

Consequently, $1-H$ is a co-isometry
with one dimensional kernel. As 
\[
(1-H)^k x^n \= \left( \frac{n}{n+1} \right)^k x^n ,
\]
we see that $1-H^*$ is a pure isometry of multiplicity $1$.
Let us state this for future use.
\begin{prop} (Brown, Halmos, Shields) The operator $(1-H^*)$ is a  pure isometry of multiplicity one.
\label{prbas1}
\end{prop}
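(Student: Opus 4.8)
The plan is to read the whole statement off Lemma \ref{lemon1} together with the computation $(1-H)x^n=\frac{n}{n+1}x^n$. Write $e$ for the constant function $1\in\ltwo$; since $\int_0^1 1\,dx=1$ it has unit norm, and Lemma \ref{lemon1} says exactly that $\norm{(1-H)f}^2=\norm f^2-|\ip f e|^2$ for every $f\in\ltwo$. First I would record this in operator form as $(1-H)^*(1-H)=I-P$, where $P$ is the rank-one orthogonal projection $Pf=\ip f e\,e$ onto the constants. Since $I-P$ is an orthogonal projection, $1-H$ is a partial isometry with initial space $\{e\}^\perp$ and kernel $\mathbb C e$; in particular $\ker(1-H)$ is one dimensional, consistent with $He=e$.

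Next I would promote ``partial isometry'' to ``co-isometry'' by showing $1-H$ is onto. From $(1-H)x^n=\frac{n}{n+1}x^n$ we get $x^n\in\ran(1-H)$ for every integer $n\ge 1$, and a partial isometry has closed range, so it suffices that $\vee\{x^n:n\ge 1\}=\ltwo$. This is immediate from the M\"untz--Sz\'asz theorem (Theorem \ref{thmms}), since $\sum_{n\ge1}\frac{2n+1}{(n+1)^2}=\infty$; alternatively, more in the real-variable spirit of this note, the polynomials $1-(1-x)^n$ vanish at $0$ and converge in $\ltwo$ to $e$, so the closed span of $\{x^n:n\ge1\}$ contains every polynomial and hence, by Weierstrass, all of $\ltwo$. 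Thus $1-H$ is a surjective partial isometry, i.e.\ $(1-H)(1-H)^*=I$, so $V:=1-H^*=(1-H)^*$ is an isometry; its multiplicity is $\dim\ker V^*=\dim\ker(1-H)=1$ by the first step.

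It then remains to check purity, and I would do this by showing $(1-H)^n\to0$ in the strong operator topology, which forces the unitary summand in the von Neumann--Wold decomposition (Theorem \ref{thmvnw}) of $V=1-H^*$ to be $0$. On the dense subspace $\spn\{x^m:m\ge0\}$ the convergence is immediate from $(1-H)^nx^m=\big(\tfrac{m}{m+1}\big)^n x^m\to0$, and $\norm{(1-H)^n}\le1$ for all $n$ since $1-H$ is a co-isometry; a uniform norm bound together with strong convergence on a dense set gives strong convergence everywhere. Equivalently, if $g\in\bigcap_{n\ge0}\ran(V^n)$, write $g=V^nh_n$; then $h_n=(V^*)^nV^nh_n=(1-H)^ng$ and $\norm{h_n}=\norm g$ because $V$ is an isometry, so $\norm g=\lim_n\norm{(1-H)^ng}=0$. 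Hence $V$ is pure.

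I do not expect a genuine obstacle: this proposition is essentially a warm-up that repackages the unitary equivalence of $H$ with $1-S^*$ from \cite{bhs65}. The one step that needs care is the passage from partial isometry to co-isometry: Lemma \ref{lemon1} by itself only yields $(1-H)^*(1-H)=I-P$, which does \emph{not} force $(1-H)(1-H)^*=I$ (the unilateral shift $S$ satisfies $S^*S=I$ but not $SS^*=I$), so the surjectivity of $1-H$ genuinely needs the density statement about the monomials. Everything else is bookkeeping with the von Neumann--Wold picture.
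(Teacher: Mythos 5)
Your proof is correct and follows essentially the same route as the paper: Lemma \ref{lemon1} gives that $1-H$ is a partial isometry with one-dimensional kernel, and the action $(1-H)^k x^n = \left(\tfrac{n}{n+1}\right)^k x^n$ on the dense span of the monomials yields purity via the von Neumann--Wold decomposition. You also correctly supply the one step the paper leaves implicit, namely the surjectivity of $1-H$ needed to upgrade ``partial isometry'' to ``co-isometry,'' which you rightly note does not follow from Lemma \ref{lemon1} alone.
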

Proposition \ref{prbas1} was first proved in \cite{bhs65}. 
If we apply powers of $(1-H^*)$ to the constant function $1$, we get a useful orthonormal basis. 
This was first found explicitly in \cite{kmp06}.
\begin{lem}\label{inv.lem.10a}
\be\label{inv.10a}
(H^*)^j \ 1 = (-1)^j \frac{(\log x)^j}{j!}
\ee
\end{lem}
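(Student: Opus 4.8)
The plan is to compute the adjoint $H^*$ explicitly and then induct on $j$. Applying Fubini's theorem to the inner product $\ip{Hf}{g}$, writing $Hf(x) = \frac1x\int_0^x f(t)\,dt$ and interchanging the order of integration over the triangle $\{0 < t < x < 1\}$, gives
\[
\ip{Hf}{g} \= \int_0^1 f(t)\,\overline{\int_t^1 \frac{g(x)}{x}\,dx}\,dt ,
\]
so that $H^* g(t) = \int_t^1 \frac{g(x)}{x}\,dx$. The interchange is legitimate because $H$ is bounded on $\ltwo$ (Hardy's theorem), which makes the relevant double integral of $\frac1x |f(t)||g(x)|$ over the triangle finite, so Tonelli applies; alternatively one verifies the formula first for bounded $f,g$ and passes to the limit.

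With the formula for $H^*$ in hand, I would run the induction. The case $j=0$ is the identity $1 = 1$. For the inductive step, assume $(H^*)^j 1 = (-1)^j (\log x)^j / j!$; this function lies in $\ltwo$ since $\int_0^1 (\log x)^{2j}\,dx < \infty$, so a further application of the bounded operator $H^*$ is meaningful. Using the antiderivative $\frac{d}{dt}\frac{(\log t)^{j+1}}{j+1} = \frac{(\log t)^j}{t}$ together with $\log 1 = 0$,
\[
(H^*)^{j+1} 1 \= (-1)^j \frac{1}{j!} \int_x^1 \frac{(\log t)^j}{t}\,dt \= (-1)^j \frac{1}{j!} \left[ \frac{(\log t)^{j+1}}{j+1} \right]_{t=x}^{t=1} \= (-1)^{j+1} \frac{(\log x)^{j+1}}{(j+1)!} ,
\]
which closes the induction.

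There is essentially no serious obstacle; the computation is elementary once $H^*$ is identified. The only points needing a word of care are the Fubini interchange used to obtain $H^*$ (handled by boundedness of $H$, or by a density argument) and the remark that each iterate $(\log x)^j/j!$ stays in $\ltwo$, so that applying $H^*$ again makes sense — and boundedness of $H^*$ is already free from the fact, recorded in Lemma \ref{lemon1} and Proposition \ref{prbas1}, that $1-H^*$ is an isometry. One could instead avoid evaluating $H^*$ on a general function by noting directly that $H^* x^s = (1-x^s)/s$ for $s\neq 0$, whence $H^*1 = \lim_{s\to 0}(1-x^s)/s = -\log x$ settles the case $j=1$; but the self-contained induction above is cleaner and is exactly what is needed to recognize $(1-H^*)^n 1$ as the Laguerre polynomial evaluated at $\log\frac1x$.
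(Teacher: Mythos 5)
Your proof is correct and follows essentially the same route as the paper's: induction on $j$ using the explicit formula $H^*g(x)=\int_x^1 g(t)/t\,dt$, which the paper uses without comment and you justify via Fubini. The extra care about the adjoint formula and square-integrability of the iterates is harmless but not needed beyond what the paper records.
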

\begin{proof}
We proceed by induction. Clearly, \eqref{inv.10a} holds when $j=0$. Assume $j \ge 0$ and \eqref{inv.10a} holds. Then
\begin{align*}
(H^*)^{j+1} \ 1 &= H^* ((H^*)^j \ 1)\\ \\
&= \frac{(-1)^j}{j!} H^*(\log x)^j\\\\
&=\frac{(-1)^j}{j!}\int_x^1 \frac{(\log t)^j}{t} dt\\ \\
&=\frac{(-1)^j}{j!}\int_{\log x}^0 u^j du\\ \\
&=(-1)^{j+1} \frac{(\log x)^{j+1}}{(j+1)!}
\end{align*}
\end{proof}
\begin{lem}\label{inv.lem.20a}
\[
(1-H^*)^n\  1 \=  \sum_{j=0}^n \binom{n}{j}\frac{(\log x)^j}{j!}
\]
\end{lem}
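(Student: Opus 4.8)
The plan is to expand $(1-H^*)^n$ by the binomial theorem and apply the previous lemma term by term. Since the identity operator commutes with $H^*$, we have the operator identity
\[
(1-H^*)^n \= \sum_{j=0}^n \binom{n}{j} (-1)^j (H^*)^j ,
\]
valid because $H^*$ is a bounded operator on $\ltwo$. Applying this to the constant function $1$ and invoking Lemma \ref{inv.lem.10a}, which gives $(H^*)^j\,1 = (-1)^j \frac{(\log x)^j}{j!}$, the two factors of $(-1)^j$ cancel and we obtain
\[
(1-H^*)^n\,1 \= \sum_{j=0}^n \binom{n}{j} (-1)^j \cdot (-1)^j \frac{(\log x)^j}{j!} \= \sum_{j=0}^n \binom{n}{j}\frac{(\log x)^j}{j!},
\]
which is exactly the claimed formula.

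There is no real obstacle here: the only point worth stating is that the binomial expansion of $(1-H^*)^n$ is legitimate because $I$ and $H^*$ commute, so no ordering issues arise. An alternative would be a direct induction on $n$, writing $(1-H^*)^{n+1}1 = (1-H^*)\big[(1-H^*)^n 1\big]$ and using Lemma \ref{inv.lem.10a} to compute $H^*$ applied to each power of $\log x$; this works but requires a Pascal-type index manipulation, whereas the binomial route is immediate. I would therefore present the short binomial argument.

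\begin{proof}
Since the identity operator commutes with $H^*$, the binomial theorem gives
\[
(1-H^*)^n \= \sum_{j=0}^n \binom{n}{j} (-1)^j (H^*)^j .
\]
Applying this to the constant function $1$ and using Lemma \ref{inv.lem.10a}, we get
\[
(1-H^*)^n\,1 \= \sum_{j=0}^n \binom{n}{j} (-1)^j (H^*)^j\,1 \= \sum_{j=0}^n \binom{n}{j} (-1)^j (-1)^j \frac{(\log x)^j}{j!} \= \sum_{j=0}^n \binom{n}{j}\frac{(\log x)^j}{j!}.
\]
\end{proof}
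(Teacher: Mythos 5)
Your proof is correct and is essentially identical to the paper's: both expand $(1-H^*)^n$ by the binomial theorem and apply Lemma \ref{inv.lem.10a} term by term, with the two factors of $(-1)^j$ cancelling. Nothing further is needed.
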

\begin{proof} 
By  Lemma \ref{inv.lem.10a},
\begin{align*}
(1-H^*)^n\  1 &= \sum_{j=0}^n (-1)^j\binom{n}{j}\ (H^*)^j\ 1\\ \\
&=\sum_{j=0}^n (-1)^j\binom{n}{j} \big((-1)^j \frac{(\log x)^j}{j!}\big)\\ \\
&=\sum_{j=0}^n \binom{n}{j}\frac{(\log x)^j}{j!}.
\end{align*}
\end{proof}
We have proved that the functions
\be
\label{eqbec1}
e_n(x) \= \sum_{j=0}^n \binom{n}{j}\frac{(\log x)^j}{j!}
\ee
are orthonormal. To see that they are complete, note that their closed linear span $\m$  is invariant under
$H^*$ and contains the  function $1$.  Since the constant functions are the kernel of the pure co-isometry
$(1-H)$, this means $\m = \ltwo$ by the von Neumann-Wold Theorem \ref{thmvnw}. So we have proved the following result, which was first proved in \cite{kmp06}.
\bt
\label{thmbec1} (Kaiblinger, Maligranda, Persson)
The functions $e_n$ defined by \eqref{eqbec1} form an orthonormal basis for $\ltwo$.
\et

The Laguerre polynomials are the polynomials 
\[
p_n(x) \=  \sum_{j=0}^n (-1)^j \binom{n}{j}\frac{( x)^j}{j!} .
\]
These are orthogonal polynomials for $L^2[0,\i)$ with the weight function $e^{-x}$.
As $e_n(x) = p_n(\log \frac 1x)$, the change of variables $ t = \log \frac 1x$ is an alternative way to prove that $e_n$
are orthonormal.

The functions $e_n$ are generalized eigenvectors of $H$ at $1$. 
 Later we shall need the following.
\begin{prop}
\label{prge}
Let $s \in \s$. The $(n+1)^{\rm st}$ generalized eigenvector of $H$ with eigenvalue $\frac{1}{s+1}$ is in the linear span
of $\{ x^s, (\log x) x^s, \dots (\log x)^n x^s \}$.
\end{prop}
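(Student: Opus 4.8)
The plan is to compute $H$ directly on each function $(\log x)^j x^s$ and to show that the $(n+1)$-dimensional space
\[
V_n \= \spn\{x^s,\, (\log x) x^s,\, \dots,\, (\log x)^n x^s\}
\]
is invariant for $H$, with $H - \tfrac{1}{s+1}$ acting on it as a single nilpotent Jordan block of size $n+1$; the $(n+1)$st generalized eigenvector then falls out.

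The computational core is the identity, valid for $m \ge 1$,
\[
H\big((\log x)^m x^s\big) \= \frac{1}{s+1}\,(\log x)^m x^s - \frac{m}{s+1}\, H\big((\log x)^{m-1} x^s\big),
\]
together with $H x^s \= \tfrac{1}{s+1} x^s$. This is a single integration by parts in $\int_0^x (\log t)^m t^s\, dt$ (take $u = (\log t)^m$ and $dv = t^s\, dt$); the boundary term at $0$ vanishes because $\Re(s+1) > \tfrac12$, and each $(\log x)^m x^s$ lies in $\ltwo$ precisely because $\Re s > -\tfrac12$, i.e.\ $s \in \s$. Iterating the identity gives, by induction on $m$, that $H\big((\log x)^m x^s\big)$ is a linear combination of $x^s, (\log x)x^s, \dots, (\log x)^m x^s$ whose coefficient of the top term $(\log x)^m x^s$ equals $\tfrac{1}{s+1}$. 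Hence $H$ maps $V_n$ into itself, and
\[
\Big(H - \tfrac{1}{s+1}\Big)\big((\log x)^m x^s\big) \= -\,\frac{m}{s+1}\, H\big((\log x)^{m-1} x^s\big) \in V_{m-1},
\]
in which the coefficient of $(\log x)^{m-1}x^s$ is $-\tfrac{m}{(s+1)^2} \ne 0$ for $m \ge 1$. So in the basis $x^s, (\log x)x^s, \dots, (\log x)^n x^s$ the operator $\big(H - \tfrac{1}{s+1}\big)\big|_{V_n}$ is strictly upper triangular with nonzero superdiagonal, hence nilpotent of index exactly $n+1$. In particular $(\log x)^n x^s \in V_n$ satisfies $\big(H - \tfrac{1}{s+1}\big)^{n+1}(\log x)^n x^s = 0$ while $\big(H - \tfrac{1}{s+1}\big)^{n}(\log x)^n x^s$ is a nonzero multiple of $x^s$; it is therefore a generalized eigenvector of $H$ at $\tfrac{1}{s+1}$ of order exactly $n+1$.

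To pin down that this is \emph{the} $(n+1)$st generalized eigenvector — that every vector killed by $\big(H - \tfrac{1}{s+1}\big)^{n+1}$ already lies in $V_n$ — it suffices to show $\dim \ker\big(H - \tfrac{1}{s+1}\big)^{n+1} = n+1$, for then this kernel equals $V_n$ (it contains $V_n$, which has that dimension). Here I would invoke Proposition \ref{prbas1}: writing $V = 1 - H^*$, a pure isometry of multiplicity one, the von Neumann--Wold Theorem \ref{thmvnw} gives a unitary carrying $V$ to the unilateral shift $S$ and $V^*$ to the backward shift $S^*$ on $\ell^2({\mathbb Z}_{\ge 0})$. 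Since $H = 1 - V^*$, we have $H - \tfrac{1}{s+1} = -\big(V^* - \tfrac{s}{s+1}\big)$, and $\big|\tfrac{s}{s+1}\big| < 1$ exactly when $\Re s > -\tfrac12$; so the kernel in question corresponds to $\ker\big(S^* - \tfrac{s}{s+1}\big)^{n+1}$, which a short recursion on $\ell^2$ sequences shows is $(n+1)$-dimensional (for $\tfrac{s}{s+1}\ne 0$ it is spanned by the $\ell^2$ sequences $\big(j^i (\tfrac{s}{s+1})^{j}\big)_{j\ge 0}$ with $0 \le i \le n$; for $s = 0$ it is $\ker(S^*)^{n+1}$, likewise of dimension $n+1$). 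This gives the dimension count and completes the proof.

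I do not expect a genuine obstacle: the substance is the one-line integration by parts and the bookkeeping of the Jordan structure. The only points that need care are the vanishing of the boundary term at $0$ and the membership $(\log x)^m x^s \in \ltwo$, both of which rely on $s \in \s$; and — if one wants uniqueness of the generalized eigenvector rather than merely its existence — the dimension count, for which Proposition \ref{prbas1} does all the work.
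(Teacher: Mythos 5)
Your proof is correct, and it overlaps with the paper's while adding something the paper leaves implicit. The paper obtains the triangular action of $H$ on $V_n=\spn\{x^s,(\log x)x^s,\dots,(\log x)^n x^s\}$ by repeatedly differentiating the eigenvalue identity $Hx^s=\tfrac{1}{s+1}x^s$ with respect to $s$, arriving at $H(\log x)^n x^s=\tfrac{1}{s+1}(\log x)^n x^s+\sum_{j<n}c_j(s)(\log x)^j x^s$ without tracking the lower-order coefficients; your integration by parts yields the same triangular structure via an explicit recursion, which has the advantage of showing the superdiagonal entries $-m/(s+1)^2$ are nonzero, so that $\bigl(H-\tfrac{1}{s+1}\bigr)\big|_{V_n}$ is a single Jordan block and the order of the generalized eigenvector is exactly $n+1$. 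Where you genuinely depart from the paper is the reverse inclusion: the paper states the equality ${\rm Ker}\,(H-\tfrac{1}{s+1})^{n+1}=V_n$ but its induction only establishes $V_n\subseteq {\rm Ker}$, leaving the dimension count to the reader; your reduction via the Wold decomposition of $1-H^*$ to $\ker\bigl(S^*-\tfrac{s}{s+1}\bigr)^{n+1}$ on $\ell^2$, with $\bigl|\tfrac{s}{s+1}\bigr|<1$ precisely when $s\in\s$, supplies that count cleanly. (A more elementary alternative, staying inside $\ltwo$: induct on $n$, using that $H-\tfrac{1}{s+1}$ maps $V_n$ onto $V_{n-1}$ because the superdiagonal is nonzero, together with the fact that the eigenspace at $\tfrac{1}{s+1}$ is exactly $\mathbb{C}x^s$.) Your checks of the boundary term at $0$ and of $(\log x)^m x^s\in\ltwo$ are exactly the two places where $s\in\s$ is used, and both are right.
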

\bp
We want to prove
\be
\label{eqge3}
{\rm Ker} (H- \frac{1}{s+1})^{n+1} \= \vee \{ x^s, (\log x) x^s, \dots (\log x)^n x^s \}.
\ee
This is true when $n=0$, since
\be
\label{eqge1}
H x^s \= \frac{1}{s+1} x^s .
\ee
Differentiate both sides of \eqref{eqge1} with respect to $s$.
We get
\be
\label{eqge2}
H ( \log x) x^s \= \frac{1}{s+1} (\log x) x^s  - \frac{1}{(s+1)^2} x^s .
\ee
Now we proceed by induction. The inductive hypothesis is that
\be
\label{eqge4}
H (\log x)^n x^s \=  \frac{1}{s+1} (\log x)^n  x^s + \sum_{j=0}^{n-1} c_j(s) (\log x)^j x^s 
\ee
for some functions $c_j$. We have proved \eqref{eqge4} for $n=0$ and $1$. (The $n=1$ case we proved just for expositional clarity).  Assume the hypothesis holds up to $n$.
Differentiate \eqref{eqge4} with respect to $s$ and we get
\be
\label{eqge5}
\notag
H (\log x)^{n+1}  x^s \=  \frac{1}{s+1} (\log x)^{n+1}  x^s - \frac{1}{(s+1)^2}  (\log x)^n  x^s + \sum_{j=0}^{n-1} c_j^\prime(s) (\log x)^j x^s  + c_j(s) (\log x)^{j+1} x^s.
\ee
Thus by induction, \eqref{eqge4} holds for all $n$, and hence so does \eqref{eqge3}.
\ep

\section{Commutant Lifting for the Hardy operator}
\label{secd}

Suppose $T : \ltwo \to \ltwo$ commutes with $H$. Then it must have the same eigenvectors, and so be a 
monomial operator of the form
\be
\label{eqcl1}
T: x^s \ \mapsto \ \alpha(s) x^s .
\ee
When is such an operator bounded?
\bt
\label{thmcl1}
The operator $T$ commutes with $H$ and has norm at most $M$ if and only if $T$ is of the form
\eqref{eqcl1} and, for any finite set $\{ s_i \}_{i=1}^N \subset \s$, the matrix
\be
\label{eqcl2}
\left( \frac{M^2 - \overline{\alpha(s_i)} \alpha(s_j)}{ 1 + \overline{s_i} + s_j } \right)_{i,j=1}^N
\ee
is positive semidefinite.
\et
$T$ may be defined by \eqref{eqcl1} just on some subspace of $\ltwo$. The positivity 
of \eqref{eqcl2} on this set is necessary and  sufficient to lift $T$ from the span of $\{ x^{s_i} \}$ to an operator on all of $\ltwo$ that commutes with $T$ and has the same norm. Without loss of generality we can take $M=1$.
\bt
\label{thmcl2}
Suppose that for some subset $\s_0 \subseteq \s$ there is an operator
\beq
T : \vee \{  x^{s} : s \in \s_0 \} & \ \to \ & \vee \{  x^{s}  : s \in \s_0 \}  \\
T: x^{s} & \mapsto & \alpha(s) x^{s} .
\eeq
A necessary and sufficient condition for $T$ to extend to an operator from $\ltwo$ to $\ltwo$ that commutes with $H$ and has norm at most one is that for every finite set $\{ s_i \} \subseteq \s_0$, we have
\[
\left( \frac{1 - \overline{\alpha(s_i)} \alpha(s_j)}{ 1 + \overline{s_i} + s_j } \right)
\ \geq \ 0.
\]
\et

Notice that Theorem  \ref{thmcl1} is a special case
of Theorem \ref{thmcl2}, so we shall just prove the latter theorem.

\bp {\em (of Theorem \ref{thmcl2}.)}
Necessity: We have that $1 - T^*T \geq 0 $.
Therefore
\be
\label{eqcl3}
\la (1 - T^*T) x^{s_j} , x^{s_i}  \ra \=
\left( \frac{1 - \overline{\alpha(s_i)} \alpha(s_j)}{ 1 + \overline{s_i} + s_j } \right)
\ee
is a positive semi-definite matrix for any subset of $\s_0$.

Sufficiency: Suppose that \eqref{eqcl3} is positive semi-definite for every finite subset of $\s_0$.
Then $T$ commutes with $H|_{\vee \{ x^s : s \in \s_0 \} } $.
Let us define a kernel on $\s$ by
\beq
k(s, t) &\=& \int_0^1 x^t \overline{x^s} dx \\
&=&
 \frac{1}{1 + t + \overline{s}} .
 \eeq
 The reciprocal of $k$ is the sesquilinear form
 \beq
 \ell(s,t) & \= & (\frac 12 + t) + \overline{(\frac 12 + s)} \\
 &=& \frac 12 ( \frac 32 + \bar s) ( \frac 32 + t) - \frac 12 (\frac 12 - \bar s) ( \frac 12 - t) .
 \eeq
 So for any $N \geq 2$ the matrix
 $[\ell(s_i, s_j)]_{i,j=1}^N$ is a rank 2 symmetric matrix, with one positive and
 one negative eigenvalue. By
Theorem \ref{thmmq}, $T$ extends to an operator of norm $1$ on all of $\ltwo$ that has each $x^s$ as an eigenvector, and hence commutes with $H$.
 \ep

\section{Monomial spaces with multiplicity}
\label{sece}
If one takes the two dimensional monomial spaces $\m( s, s+ h)$ and lets $h \to 0$, the
spaces converge to the two-dimensional space spanned by $x^s$ and $\frac{\partial}{\partial s} x^s = (\log x) x^s$.
So if we have a multi-set $S = \{s_1, \dots, s_1, s_2, \dots, s_2,\dots , s_n \}$, where each $s_j$ appears $m_j$ times,
we will define 
\be
\label{eqgms}
\m(S) \= \vee \{ x^{s_1}, (\log x) x^{s_1}, \dots, (\log x)^{m_1 -1} x^{s_1}, \dots ,
x^{s_n}, (\log x) x^{s_n}, \dots, (\log x)^{m_n -1} x^{s_n } \} .
\ee
We shall call a set of the form \eqref{eqgms} a generalized finite monomial space.

\begin{prop}
Every generalized finite monomial space is a limit of finite monomial spaces.
\label{prgms}
\end{prop}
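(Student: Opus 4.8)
Following the remark that opens this section, the plan is to replace each exponent $s_j$ occurring with multiplicity $m_j$ by a cluster of $m_j$ distinct exponents crowding in on $s_j$, and then to extract from the genuine monomials attached to that cluster a basis converging, in $\ltwo$, to the standard basis $\{x^{s_j},(\log x)x^{s_j},\dots,(\log x)^{m_j-1}x^{s_j}\}$ of the $j$-th block of $\m(S)$. All the approximating spaces will then have the same finite dimension $|S|$, and convergence of bases will be upgraded to convergence in the sense of Definition \ref{defin1} by passing to orthogonal projections.

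\textbf{A convergence criterion.} First I would record the following elementary fact: if $\m$ is a finite-dimensional subspace of $\ltwo$ with basis $f_1,\dots,f_d$, and $\{\m_k\}$ is a sequence of subspaces admitting bases $f^{(k)}_1,\dots,f^{(k)}_d$ with $f^{(k)}_i\to f_i$ in $\ltwo$ for each $i$, then $\m_k\to\m$ in the sense of Definition \ref{defin1}. Indeed, Gram--Schmidt is a continuous function of a linearly independent $d$-tuple, so it produces orthonormal bases with $e^{(k)}_i\to e_i$, whence the projections $P_{\m_k}=\sum_{i=1}^d\langle\,\cdot\,,e^{(k)}_i\rangle e^{(k)}_i$ converge to $P_\m$ in operator norm. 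Then for every $f\in\ltwo$ one has ${\rm dist}(f,\m_k)=\|(I-P_{\m_k})f\|\to\|(I-P_\m)f\|={\rm dist}(f,\m)$, which vanishes precisely when $f\in\m$; that is exactly the assertion of Definition \ref{defin1}.

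\textbf{The construction.} Write $S$ as $n$ blocks, the $j$-th being $s_j$ repeated $m_j$ times, and fix once and for all distinct complex numbers $\zeta^{(j)}_1,\dots,\zeta^{(j)}_{m_j}$ for each $j$. For a positive integer $k$ put $h_k=1/k$ and $S_k=\bigcup_{j=1}^n\{\,s_j+h_k\zeta^{(j)}_i:1\le i\le m_j\,\}$; for $k$ large these $\sum_jm_j=|S|$ points are distinct and lie in $\s$, so $\m(S_k)$ is a genuine finite monomial space of dimension $|S|$. Fix $j$ and set $\phi(w)=x^{s_j+w}=x^{s_j}e^{w\log x}$; because $\Re s_j>-\tfrac12$, the map $w\mapsto\phi(w)$ is an $\ltwo$-valued analytic function on a neighborhood of $0$, with $\phi^{(r)}(0)=(\log x)^rx^{s_j}$. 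For $0\le r\le m_j-1$ let $f^{(k)}_{j,r}$ be the divided difference of $\phi$ on the $r+1$ nodes $h_k\zeta^{(j)}_1,\dots,h_k\zeta^{(j)}_{r+1}$. Two standard facts then close the argument. First, for each $k$ the collection $\{f^{(k)}_{j,r}:0\le r\le m_j-1\}$ spans the same subspace as $\{\phi(h_k\zeta^{(j)}_i):1\le i\le m_j\}$, since the Newton change of basis between them is triangular with nonzero diagonal; taking the union over $j$ shows $\{f^{(k)}_{j,r}\}_{j,r}$ is a basis of $\m(S_k)$. Second, expanding $\phi$ in its (locally uniformly $\ltwo$-convergent) Taylor series about $0$ and using that the divided difference of $w^p$ on $r+1$ nodes is $0$ for $p<r$, is $1$ for $p=r$, and is homogeneous of degree $p-r$ in the nodes for $p>r$, one obtains $f^{(k)}_{j,r}=\tfrac1{r!}(\log x)^rx^{s_j}+O(h_k)\to\tfrac1{r!}(\log x)^rx^{s_j}$ in $\ltwo$. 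Thus $\{f^{(k)}_{j,r}\}_{j,r}$ is a basis of $\m(S_k)$ converging to the basis $\{\tfrac1{r!}(\log x)^rx^{s_j}\}_{j,r}$ of $\m(S)$, and the criterion gives $\m(S_k)\to\m(S)$, so $\m(S)$ is a monomial space.

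\textbf{Main obstacle.} The point that needs care is the convergence criterion, i.e.\ passing from ``the natural spanning vectors of $\m(S_k)$ converge'' to ``$\m(S_k)\to\m(S)$'': the naive spanning set $\{x^{s_j+h_k\zeta^{(j)}_i}\}$ of $\m(S_k)$ degenerates in the limit to the single function $x^{s_j}$, so one must both form the divided differences (to expose the hidden vectors $(\log x)^rx^{s_j}$) and argue at the level of orthogonal projections --- equivalently, in the gap metric on the relevant Grassmannian --- rather than at the level of spanning sets. By contrast the analytic ingredient (the $\ltwo$-analyticity of $w\mapsto x^{s_j+w}$ near $0$, which is where $\Re s_j>-\tfrac12$ is used) and the divided-difference combinatorics are routine.
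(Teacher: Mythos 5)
Your argument is correct, and it shares the paper's basic strategy --- replace each exponent of multiplicity $m_j$ by a cluster of $m_j$ distinct exponents at distance $O(h)$ from it --- but the technical execution is genuinely different. The paper takes the cluster to be $s+\omega^j h$ with $\omega$ a primitive $m$-th root of unity, builds explicit approximants to $(\log x)^n x^s$ by root-of-unity filtering of the Taylor expansions, and then, for the reverse inclusion (every unit vector of the perturbed space is close to the generalized monomial space), must control the coefficients of a nearly collinear spanning set; this is done via a Cauchy-determinant lower bound on the distance from one perturbed monomial to the span of the others, yielding quantitative two-sided estimates ($O(h^m)$ in one direction, $O(h)$ in the other). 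You instead take an arbitrary cluster of distinct directions, pass to the divided-difference (Newton) basis of the perturbed space, observe that this basis converges in $\ltwo$ to the linearly independent set $\{\tfrac{1}{r!}(\log x)^r x^{s_j}\}$, and invoke continuity of Gram--Schmidt, hence of the orthogonal projections, on linearly independent tuples of fixed length. This sidesteps the Cauchy determinant entirely: the near-collinearity of the raw monomials is absorbed into the triangular change of basis, and your convergence criterion (converging bases of a fixed finite dimension give norm convergence of projections, hence convergence in the sense of Definition \ref{defin1}) finishes the proof. What the paper's route buys is explicit rates of approximation; what yours buys is a shorter and more conceptual argument whose only analytic input is the $\ltwo$-analyticity of $w\mapsto x^{s+w}$ near $w=0$ when $\Re s>-\tfrac12$. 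Both are complete proofs of the proposition.
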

\bp
Fix $m \geq 2$.
Let 
\[
\m_1 \= \vee \{ x^s, (\log x) x^{s}, \dots, (\log x)^{m -1} x^{s} \}.
\]
Let $\omega$ be a primitive $m^{\rm th}$ root of unity, and let $h$ be a small positive number.
Let
\[
\m_2 \= \vee \{ x^{s + \omega^j h} : 0 \leq j \leq m -1 \} .
\]
We shall prove that there is a constant $C$, which depends on $s$ and $m$ but not $h$, so that
\se\att
\begin{eqnarray}
\label{eqgms1}
f \in \m_1 & \ \Rightarrow \ & {\rm dist}(f, \m_2) \leq C h^m \\
f \in \m_2 & \ \Rightarrow \ & {\rm dist}(f, \m_1) \leq C h .
\label{eqgms2}
\end{eqnarray}
\att
As every generalized monomial space of the form \eqref{eqgms} is the sum of finitely many spaces of the form $\m_1$, this will prove the proposition.

In the proof we shall use $C$ for a constant that depends on $m$ but not $h$, and which may change from one line to the next.

Proof of \eqref{eqgms1}. 
(i) First take $s=0$.
By Taylor's theorem, for any unimodular number $\tau$ and any $x >0$ we have
\be
\label{eqgms3}
|x^{\tau h} - \sum_{n=0}^{m-1} \frac{(\tau h)^n}{n!} (\log x)^{n-1}|
\ \leq \ 
\frac{h^m}{m!} (\log x)^m x^{-h} .
\ee
Consider the function $f(x) = (\log x)^n$, for some $n \leq m-1$.
We shall approximate this by the function $g \in \m_2$ given by
\[
g(x) \= \frac{1}{m} \frac{n!}{h^n} \sum_{j=0}^{m-1} \bar \omega^{nj} x^{\omega^j h} .
\]
The choice of arguments for the coefficients means that if one adds together the Taylor series for each $x^{\omega^j h}$, all the terms  cancel except for the ones that are $n \ {\rm mod}\ m$ one, so 
\be
\label{eqgms4}
| g(x) -  (\log x)^n | \ \leq \ C h^{m}  (\log x)^{m+n}  x^{-h} 
\ee
where $C$  is independent of $x$. 
Integrating the square of \eqref{eqgms4} we get that 
${\rm dist}((\log x)^n , \m_2) \leq C h^{m}$.
As the functions $(\log x)^n$ form a basis for $\m_1$, we deduce that \eqref{eqgms1} holds.

(ii) For general $s$, the above argument shows that for each function $x^s (\log x)^n$ there is a function
$g$ in $\m_2$ that satisfies the pointwise estimate
\[
| g(x) -  x^s (\log x)^n  | \ \leq \ C h^{m}  (\log x)^{m+n}  x^{\Re s -h} .
\]
As long as $h$ is small enough that $\Re s - h > - \frac 12$, we again can deduce \eqref{eqgms1}.

\vs
Proof of \eqref{eqgms2}. (i) First take $s=0$.
From \eqref{eqgms3}, we get that ${\rm dist}(x^{\omega^j h}, \m_1) \leq C h^m$.
So the result will follow if we prove that whenever 
$\sum c_j x^{\omega^j h} $ is in the unit ball of $\m_2$, then
$c_j = O(\frac{1}{h^{m-1}})$. This in turn will follow if we can show that
\be
\label{eqgms5}
{\rm dist}(x^{\omega^\ell h}, \vee \{ x^{\omega^i h} : 0 \leq i \leq m-1, \ i \neq \ell \}) \ \geq \ C h^{m-1} 
\ee
for some non-zero $C$, as this proves that the functions $x^{\omega^i h}$ are not too colinear.
For definiteness, we will prove \eqref{eqgms5} for $\ell=0$.
Let $G(i,j) $ denote the Gram matrix with $(i,j)$ entry $\la x^{\omega^i h}, x^{\omega^j h} \ra = \frac{1}{ 1 + \omega^i h + \bar \omega^j h}$. Then
\be
\label{eqgms6}
{\rm dist} ( x^h, \vee_{ 1 \leq i \leq m-1} \{ x^{\omega^i h}  \})^2 \= \det G(i,j)_{i,j=0}^{m-1} / 
\det G(i,j)_{i,j=1}^{m-1}.
\ee
By Cauchy's formula for determinants 
\[
\det ( \frac{1}{ 1 + \omega^i h + \bar \omega^j h}) \= \frac{\prod_{j < i} |\omega^i h - \bar \omega^j h|^2}
{\prod_{i,j} (1 + \omega^i h + \bar \omega^j h) } .
\]
Putting this into \eqref{eqgms6}, we get 
\[
{\rm dist} ( x^h, \vee_{ 1 \leq i \leq m-1} \{ x^{\omega^i h}  \})^2 \=
\frac{ h^{2m-2} \prod_{i=1}^{m-1} |\omega^i - 1|^2}{(1+2h) \prod_{i=1}^{m-1} |1 + (1 + \omega^i) h |^2}.
\]
This equation yields \eqref{eqgms5} for $\ell = 0$, and by symmetry for all $\ell$.

(ii) For general $s \in \s$, a similar argument gives ${\rm dist}(x^{s + \omega^\ell h}, \m_1) \leq C h^m$, and
\[
{\rm dist} ( x^{s+h}, \vee_{ 1 \leq i \leq m-1} \{ x^{s+ \omega^i h}  \})^2 \=
\frac{ h^{2m-2} \prod_{i=1}^{m-1} |\omega^i - 1|^2}{(1+2\Re s + 2h) \prod_{i=1}^{m-1} |1 + 2\Re s + (1 + \omega^i) h |^2}.
\]
\ep
With more work, one can improve \eqref{eqgms2} to $O(h^m)$, but we do not need a sharper estimate.

\begin{corollary}
\label{corgms}
Any space that is a limit of generalized finite monomial spaces is a monomial space. 
\end{corollary}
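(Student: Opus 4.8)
The plan is a diagonalization, and its only real input is the quantitative form of Proposition \ref{prgms}. For closed subspaces $A,B$ of $\ltwo$ write
\[
\hat\delta(A,B)\ =\ \sup\{\,{\rm dist}(a,B)\ :\ a\in A,\ \norm a\le 1\,\},
\]
the directed gap, so that ${\rm dist}(a,B)\le\hat\delta(A,B)\,\norm a$ for every $a\in A$. The first step is to notice that the proof of Proposition \ref{prgms} actually delivers a sharper statement: for every generalized finite monomial space $\n$ and every $\varepsilon>0$ there is a finite monomial space $\n'$ with $\hat\delta(\n,\n')<\varepsilon$ \emph{and} $\hat\delta(\n',\n)<\varepsilon$. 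Indeed, write $\n$ as the sum of the clusters $\m_1^{(j)}=\vee\{(\log x)^i x^{s_j}:0\le i\le m_j-1\}$ at the distinct base points $s_1,\dots,s_n$, and put $\m_2^{(j)}=\vee\{x^{s_j+\omega_j^i h}:0\le i\le m_j-1\}$ with $\omega_j$ a primitive $m_j$-th root of unity and $h>0$ small. The estimates \eqref{eqgms1} and \eqref{eqgms2} give $\hat\delta(\m_1^{(j)},\m_2^{(j)})\le Ch$ and $\hat\delta(\m_2^{(j)},\m_1^{(j)})\le Ch$ for each $j$. Since the $\m_1^{(j)}$ involve distinct exponents they form a uniformly direct sum, hence so do the $\m_2^{(j)}$ once $h$ is small; combining these facts converts the per-cluster gap bounds into $\hat\delta(\n,\n')\le C'h$ and $\hat\delta(\n',\n)\le C'h$ for $\n'=\sum_j\m_2^{(j)}$, and one takes $h$ small.

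Granting this, let $\m$ be a limit of generalized finite monomial spaces, so $\m=\{f\in\ltwo:{\rm dist}(f,\n_k)\to 0\}$ with each $\n_k$ a generalized finite monomial space. Apply the sharpened Proposition \ref{prgms} with $\varepsilon=1/k$ to get finite monomial spaces $\m_k$ with $\hat\delta(\n_k,\m_k)<1/k$ and $\hat\delta(\m_k,\n_k)<1/k$; I claim $\m_k\to\m$, which is exactly the assertion that $\m$ is a monomial space. If $f\in\m$, let $p_k$ be the orthogonal projection of $f$ onto $\n_k$; then $\norm{p_k}\le\norm f$ and
\[
{\rm dist}(f,\m_k)\ \le\ \norm{f-p_k}+{\rm dist}(p_k,\m_k)\ \le\ {\rm dist}(f,\n_k)+\frac1k\norm f\ \longrightarrow\ 0 .
\]
Conversely, if ${\rm dist}(f,\m_k)\to 0$, let $q_k$ be the orthogonal projection of $f$ onto $\m_k$; then $\norm{q_k}\le\norm f$ and
\[
{\rm dist}(f,\n_k)\ \le\ \norm{f-q_k}+{\rm dist}(q_k,\n_k)\ \le\ {\rm dist}(f,\m_k)+\frac1k\norm f\ \longrightarrow\ 0 ,
\]
so $f\in\m$. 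Hence $\m=\{f:{\rm dist}(f,\m_k)\to 0\}$, i.e. $\m_k\to\m$, which is what was required.

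The triangle-inequality estimates in the second paragraph are routine, and the diagonalization needs no compactness once the gaps are controlled on both sides; the step that takes some care is the assembly in the first paragraph, namely that per-cluster two-sided gap bounds give a two-sided gap bound for the whole generalized finite monomial space. This rests on a non-degeneracy already implicit in Proposition \ref{prgms}: the clusters $\m_1^{(j)}$ at distinct exponents are uniformly transversal, so the perturbed clusters $\m_2^{(j)}$ stay uniformly transversal for $h$ small, and then a bare $\ell^1$ estimate over the finitely many clusters upgrades $\max_j\bigl(\hat\delta(\m_1^{(j)},\m_2^{(j)})+\hat\delta(\m_2^{(j)},\m_1^{(j)})\bigr)=O(h)$ to $\hat\delta(\n,\n')+\hat\delta(\n',\n)=O(h)$. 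Once this is in hand, both directed gaps between $\n_k$ and $\m_k$ are $O(1/k)$, and the corollary follows.
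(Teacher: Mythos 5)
Your proof is correct, and it supplies exactly what the paper leaves implicit: the corollary is stated with no proof at all, and the diagonalization you carry out is the argument the authors are tacitly invoking. You are right to flag the one genuinely non-trivial point, namely that convergence in the sense of Definition \ref{defin1} is a set equality rather than a metric statement, so ``a limit of limits is a limit'' is not automatic; one needs the two-sided directed-gap estimates, and these come from the displayed inequalities \eqref{eqgms1} and \eqref{eqgms2} in the \emph{proof} of Proposition \ref{prgms} rather than from its bare statement. Your assembly step (uniform transversality of the clusters at distinct exponents, preserved under small perturbation, upgrading per-cluster gaps to a gap for the sum) is also the step the paper glosses over in the proof of Proposition \ref{prgms} itself with the sentence ``As every generalized monomial space \dots is the sum of finitely many spaces of the form $\m_1$, this will prove the proposition,'' and your justification of it is sound: the clusters are finite-dimensional and linearly independent, so the decomposition of the fixed space $\n$ is bounded, and the bound persists for $h$ small. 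The two triangle-inequality computations in the diagonal step are correct in both directions, and together they give the required set equality $\m=\{f:{\rm dist}(f,\m_k)\to 0\}$. In short: same approach as the paper intends, with the missing quantitative bookkeeping made explicit.
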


\section{Some cyclic vectors for $H^*$}
\label{seccyc}

We know from Proposition \ref{prbas1} that the spectrum of $H$ is $\overline{\D(1,1)}$, and for $\lambda \in \D(1,1)$ that 
$H - \lambda$ is Fredholm with index 1.
It follows that $1+ sH$ and $1+ s H^*$ are invertible if and only if $s \in 
\s$.

\begin{lem} 
\label{lemcy1}
If $s \in \s$, then 
\[
x^s \= (1 + s H^*)^{-1} 1 .
\]
\end{lem}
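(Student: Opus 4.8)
The plan is to verify the identity directly by applying $(1+sH^*)$ to $x^s$ and checking that the result is the constant function $1$; since $1+sH^*$ is invertible for $s\in\s$ (as recalled just before the lemma), this suffices. The only thing I need is a formula for $H^* x^s$. Recall $H^*$ is the adjoint of the Hardy operator, and one computes from $Hf(x)=\frac1x\int_0^x f$ that $H^* g(x)=\int_x^1 \frac{g(t)}{t}\,dt$ (this is exactly the formula already used in the proof of Lemma \ref{inv.lem.10a}). Applying this to $g(t)=t^s$ gives
\[
H^* x^s \= \int_x^1 t^{s-1}\,dt \= \frac{1}{s}\bigl(1 - x^s\bigr),
\]
valid because $\Re s > -\frac12 > 0$... wait — here I should be careful, since $\Re s$ can be negative. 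Still, $\Re(s-1) > -\frac32$, so $t^{s-1}$ is integrable near $0$ is false in general; but the integral is over $[x,1]$ with $x>0$, so there is no issue with convergence pointwise, and the resulting function $\frac1s(1-x^s)$ is easily checked to lie in $\ltwo$. So the computation of $H^* x^s$ is elementary.

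Given that, the key step is the one-line algebra:
\[
(1+sH^*)x^s \= x^s + s\cdot\frac{1}{s}\bigl(1-x^s\bigr) \= x^s + 1 - x^s \= 1 .
\]
Applying $(1+sH^*)^{-1}$ to both sides yields $x^s = (1+sH^*)^{-1}1$, which is the claim. One should remark that $s\ne 0$ is automatic for $s\in\s$... actually $0\in\s$ since $\Re 0 = 0 > -\frac12$, so the formula $H^*x^0 = \frac1s(1-x^s)$ as written is a $0/0$ expression at $s=0$; there the correct reading is $H^*1 = \log\frac1x = -\log x$ (the $j=1$ case of Lemma \ref{inv.lem.10a}), and $(1+0\cdot H^*)1 = 1 = x^0$ trivially, so the identity holds at $s=0$ as well. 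I would handle this either by noting $L:=\lim_{s\to0}\frac{1-x^s}{s}=-\log x$ makes $sH^*x^s = 1-x^s$ hold uniformly, or simply by treating $s=0$ as a trivial special case.

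The main (very mild) obstacle is bookkeeping about domains of integrability and the degenerate case $s=0$; there is no conceptual difficulty. An alternative, slightly slicker route avoids $H^*$ entirely: from $Hx^s = \frac1{s+1}x^s$ (equation \eqref{eqge1}) one gets $(1+sH)x^s$... this does not directly help, so I would stick with the direct adjoint computation above, which is the most transparent.
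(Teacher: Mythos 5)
Your proof is correct, but it takes a genuinely different route from the paper's. You compute $H^*$ explicitly as $H^*g(x)=\int_x^1 g(t)/t\,dt$, evaluate $H^*x^s=\frac{1}{s}(1-x^s)$, and verify pointwise that $(1+sH^*)x^s=1$, with the degenerate case $s=0$ handled separately (where the identity is trivial anyway). The paper instead never writes down $H^*x^s$: it pairs $(1+sH^*)x^s$ against an arbitrary monomial $x^t$, moves the operator to the other side of the inner product, uses only the eigenvalue relation $Hx^t=\frac{1}{t+1}x^t$ together with $\langle x^s,x^t\rangle=\frac{1}{1+s+\bar t}$, and finds that the result agrees with $\langle 1,x^t\rangle$ for all $t\in\s$; density of the monomials (polynomials suffice) then gives $(1+sH^*)x^s=1$. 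Your version is more concrete and uniform-free of the duality step, at the mild cost of the integrability/bookkeeping remarks and the $s=0$ case split; the paper's version avoids any explicit formula for $H^*$ and any case analysis, but implicitly invokes density of $\{x^t\}$ in $\ltwo$. Both conclude the same way, by applying the inverse $(1+sH^*)^{-1}$, whose existence for $s\in\s$ is justified just before the lemma. No gaps in your argument.
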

\bp
We have
\beq
\la (1 + s H^*) x^s , x^t \ra &\=& 
\la x^s , ( 1 + \bar s \frac{1}{t+1} x^t \ra \\
&=& \frac{1}{\bar t + 1} \\
&=& \la 1, x^t \ra .
\eeq
\ep

\begin{lem}
\label{lemcy2}
Suppose $f(x) = \sum_{j=0}^N c_j x^{s_j}$, where each $s_j \in \s$. If $f$ is not orthogonal to any monomial $x^t$ for $t \in \s$, then 
$f$ is cyclic for $H^*$.
\end{lem}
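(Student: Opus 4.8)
The plan is to realize $f$ as a rational function of $H^*$ applied to the cyclic vector $1$, and then to translate the orthogonality hypothesis into the statement that the corresponding rational symbol has no zero inside the spectral disk $\D(1,1)$. Combining repeated terms, I may assume the $s_j$ are distinct and $f\neq 0$. Put $q(w)=\prod_j(1+s_jw)$ and let $p$ be the numerator in $\sum_j\frac{c_j}{1+s_jw}=\frac{p(w)}{q(w)}$, so $\deg p<\deg q$, and $p\not\equiv 0$ because monomials with distinct exponents are linearly independent in $\ltwo$. By Lemma~\ref{lemcy1}, $x^{s_j}=(1+s_jH^*)^{-1}1$; since the commuting operators $1+s_jH^*$ are invertible for $s_j\in\s$, clearing denominators gives $q(H^*)f=p(H^*)1$. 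Also, $1$ is cyclic for $H^*$: by Theorem~\ref{thmbec1} the vectors $(1-H^*)^n 1$ span a dense subspace, and $\spn\{(1-H^*)^n 1:n\le N\}=\spn\{(H^*)^n 1:n\le N\}$ for every $N$. Writing $\m_f=\vee\{(H^*)^n f:n\ge 0\}$, the vector $p(H^*)1=q(H^*)f$ lies in $\m_f$ (a polynomial in $H^*$ applied to $f$), and since $p(H^*)$ commutes with $H^*$ and $1$ is cyclic we get $\overline{\ran p(H^*)}=\vee\{(H^*)^n p(H^*)1:n\ge 0\}\subseteq\m_f$. So it suffices to show that $p(H^*)$ has dense range.

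Next I would pin down the zeros of $p$. Using $\la x^s,x^t\ra=\frac{1}{1+s+\bar t}$, one computes, for $t\in\s$,
\[
\la f,x^t\ra=\sum_j\frac{c_j}{1+s_j+\bar t}=\frac{1}{1+\bar t}\,r\!\Big(\frac{1}{1+\bar t}\Big),\qquad r(w)=\frac{p(w)}{q(w)}.
\]
The Möbius map $\tau\mapsto\frac{1}{1+\tau}$ sends $\s=\{\Re\tau>-\frac12\}$ bijectively onto $\D(1,1)$, and $\s$ is invariant under conjugation; hence the hypothesis that $\la f,x^t\ra\neq 0$ for every $t\in\s$ says precisely that $r$ has no zero in $\D(1,1)$. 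Every root $-1/s_j$ of $q$ satisfies $|-1/s_j-1|=|1+s_j|/|s_j|>1$ (equivalently $\Re s_j>-\frac12$), so $q$ is zero-free on $\overline{\D(1,1)}$; therefore the zeros of $r$ in $\D(1,1)$ coincide with those of $p$, and $p$ has no zero in $\D(1,1)$.

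Finally I would invoke the point spectrum of $H$. The eigenvalues of $H$ are exactly $\{\frac{1}{1+s}:s\in\s\}=\D(1,1)$, the open disk, so $H-\mu$ is injective for every $\mu\notin\D(1,1)$. Now $p(H^*)^*=p^{\#}(H)$, where $p^{\#}$ has the conjugated coefficients of $p$; its roots are the conjugates of the roots of $p$, hence (as $\D(1,1)$ is symmetric about $\R$) they too lie off $\D(1,1)$. Factoring $p^{\#}(H)$ into linear factors $H-\bar\mu_k$ times a nonzero scalar shows $p^{\#}(H)$ is injective, i.e.\ $p(H^*)$ has dense range; combined with $\overline{\ran p(H^*)}\subseteq\m_f$ this gives $\m_f=\ltwo$, so $f$ is cyclic for $H^*$. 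The main obstacle is the middle paragraph: recognizing that ``$f$ is orthogonal to no $x^t$'' becomes, after the substitution $\tau=\bar t$ and the conformal change of variable $u=\frac{1}{1+\tau}$, exactly the non-vanishing of the rational symbol of $f$ on the spectral disk $\D(1,1)$. Once that dictionary is in hand the rest is polynomial functional calculus; the only fussy points are the degenerate cases where $p$ is a nonzero constant or some $s_j=0$, neither of which causes trouble.
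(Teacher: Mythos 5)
Your proposal is correct and follows essentially the same route as the paper: realize $f=q(H^*)^{-1}p(H^*)1$ via Lemma~\ref{lemcy1}, show the hypothesis forces $p$ to be zero-free on $\D(1,1)$, and conclude that $p(H^*)$ has dense range because $H$ has no eigenvectors outside $\D(1,1)$. The only (cosmetic) differences are that you verify the zero-freeness of $p$ by computing $\la f,x^t\ra$ explicitly through the M\"obius correspondence $t\mapsto\frac{1}{1+\bar t}$, whereas the paper argues by contradiction after factoring out a putative root, and you deduce dense range from injectivity of the adjoint in one stroke rather than splitting into the off-$\overline{\D(1,1)}$ and boundary cases.
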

\bp
By Lemma \ref{lemcy1}, we have
\[
f(x) \= \sum_{j=0}^N c_j (1 + s_j H^*)^{-1} 1.
\]
Define a rational function $r(z)$ by
\[
r(z) \= \sum_{j=0}^N c_j \frac{1}{1 + s_j z} ,
\]
and let $p,q$ be polynomials with no common factors and $r = p/q$.
The zeroes of $q$ are at the points $\{ - \frac{1}{s_j} : 1 \leq j \leq N \}$.
We have
\be
\label{eqcy1}
f \= p(H^*) q (H^*)^{-1} 1.
\ee

Claim: $p$ has no roots in $\D(1,1)$.

Indeed, 
suppose $p(z_0) = 0$ for some $z_0 \in \D(1,1)$. 
Let $t_0 = \frac{1-\bar z_0}{\bar z_0} \in \s$.
Factor $p$ as $p(z) = (z - z_0) \tilde p (z)$.
Then
\beq
\la f , x^{t_0} \ra &\=& 
\la (H^* - z_0) \tilde p (H^*) q (H^*)^{-1} 1,  x^{t_0} \ra \\
&=& \la \tilde p (H^*) q (H^*)^{-1} 1,  (\frac{1}{t_0 +1} - \bar z_0 ) x^{t_0} \ra \\
&=& 0.
\eeq
This would contradict the assumption that $\la f , x^t \ra \neq 0 $ for all $t \in \s$.

Since $q(H^*)$ is invertible, $f$ is cyclic if and only if $p(H^*) 1$ is cyclic.
We  now factor $p(z) = c\prod (z - z_j)$. 
If $z_j \notin \overline{\D(1,1)}$, then $(H^* - z_j)$ is invertible.
If $z_j \in \partial \D(1,1)$, then $(H^* - z_j)$ has dense range, since $H$ has no eigenvectors on $\partial \D(1,1)$.
Therefore $p(H^*)$ has dense range, and in particular takes cyclic vectors to cyclic vectors.
\ep

If $\la f, x^t \ra =0$, then $f$ is in the range of $H^* - \frac{1}{1+ \bar t}$.
We shall say that $\la f , x^t \ra$ vanishes to order $m$ if $f$ is orthogonal to
$\{ x^t, (\log x) x^t, \dots, (\log x)^{m-1} x^t \}$.
\begin{lem}
\label{lemcy3}
Suppose $f(x) = \sum_{j=0}^N c_j x^{s_j}$, where each $s_j \in \s$, and $f \neq 0$.
Let \[
 \{ t  \in \s : \la f , x^t \ra = 0\} \=  \{ t_1, \dots , t_m\} ,
\]
counted with multiplicity.
Let $z_i = \frac{1}{1+ \bar t_i}$ for $1 \leq i \leq m$.
Then
\be
\label{eqcy2}
f \= \prod_{i=1}^m (H^* - z_i) g ,
\ee
where $g$ is cyclic for $H^*$.
\end{lem}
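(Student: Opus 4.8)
The plan is to factor out the zeros of $\langle f, x^t\rangle$ one at a time, reducing to the cyclic case handled by Lemma \ref{lemcy2} (and its generalization to higher-order vanishing). As in the proof of Lemma \ref{lemcy2}, write $f = p(H^*) q(H^*)^{-1} 1$ via the rational function $r(z) = \sum_{j} c_j/(1+s_j z) = p(z)/q(z)$, where $q(H^*)$ is invertible because its roots $-1/s_j$ lie outside $\overline{\D(1,1)}$ when $s_j \in \s$. The condition $\langle f, x^{t_i}\rangle = 0$ translates, via Lemma \ref{lemcy1} and the computation in the Claim of Lemma \ref{lemcy2}, into $p(z_i) = 0$ with $z_i = \frac{1}{1+\bar t_i} \in \D(1,1)$; and vanishing to order $\mu$ at $t_i$ corresponds to a zero of $p$ of order $\mu$ at $z_i$ (differentiate the identity $\langle (H^*-z_i)^k g, x^{t_i}\rangle$-type relations, or equivalently use Proposition \ref{prge} to see that orthogonality to $(\log x)^{k}x^{t_i}$ for $k < \mu$ is equivalent to $p$ vanishing to order $\mu$ at $z_i$). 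Therefore $p$ factors as $p(z) = \big(\prod_{i=1}^m (z - z_i)\big)\, \tilde{p}(z)$ where $\tilde p$ has no zeros in $\D(1,1)$.

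Next I would set $g := \tilde p(H^*)\, q(H^*)^{-1} 1$, so that \eqref{eqcy2} holds by the functional calculus (all these operators commute, being polynomials and inverses of polynomials in $H^*$). It remains to show $g$ is cyclic for $H^*$. Since $q(H^*)$ is invertible, $g$ is cyclic iff $\tilde p(H^*) 1$ is cyclic. Factor $\tilde p(z) = c \prod_k (z - w_k)$. By construction no $w_k$ lies in the open disk $\D(1,1)$; if $w_k \notin \overline{\D(1,1)}$ then $H^* - w_k$ is invertible, and if $w_k \in \partial \D(1,1)$ then $H^* - w_k$ has dense range because $H$ has no eigenvalues on $\partial\D(1,1)$ (the eigenvalues are exactly $\frac{1}{s+1}$ for $s\in\s$, which fill the open disk). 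Hence $\tilde p(H^*)$ has dense range, so it maps the cyclic vector $1$ — which is cyclic since $1-H^*$ is a pure isometry of multiplicity one (Proposition \ref{prbas1}) with $1 = (1-H^*)^0 1$ generating — to a cyclic vector. Thus $g$ is cyclic.

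The one point requiring genuine care is the order-counting: I must verify that $\langle f, x^t\rangle$ vanishing to order $\mu$ (in the sense defined just before the lemma, i.e. $f \perp (\log x)^k x^t$ for $k = 0, \dots, \mu-1$) corresponds exactly to $p$ having a zero of order $\mu$ at the corresponding point $z$, so that the total zero count $m$ matches and $\tilde p$ has no zeros left in $\D(1,1)$. The cleanest route is to differentiate the basic identity $\langle (1+sH^*)x^s, x^t\rangle = \langle 1, x^t\rangle$ of Lemma \ref{lemcy1} in $s$, or to use Proposition \ref{prge} together with the observation that $\langle p(H^*)q(H^*)^{-1}1, (\log x)^k x^t\rangle$ can be rewritten as a $k$-th derivative of $z \mapsto p(z)$ evaluated at $z = \frac{1}{1+\bar t}$ (up to an invertible factor coming from $q$ and from the change of variables $t \leftrightarrow z$, which is holomorphic and locally invertible on $\s$). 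I would also note that the zero set $\{t : \langle f, x^t\rangle = 0\}$ is finite: $\langle f, x^t\rangle = \overline{q(\bar z)}^{-1}\, p(\bar z) \cdot (\text{nonzero factor})$ is, after the substitution $z = \frac{1}{1+\bar t}$, a rational function of $t$ that is not identically zero (since $f \neq 0$), so it has only finitely many zeros in $\s$, justifying that the list $\{t_1,\dots,t_m\}$ is well-defined.
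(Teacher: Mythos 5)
Your proposal is correct and follows essentially the same route as the paper: write $f = p(H^*)q(H^*)^{-1}1$, identify the zeros of $t \mapsto \la f, x^t\ra$ in $\s$ (with multiplicity, via Proposition \ref{prge}) with the roots of $p$ in $\D(1,1)$, peel off the factor $\prod_i(z-z_i)$, and conclude cyclicity of $g$ because the remaining factor $\tilde p(H^*)$ has dense range. Your extra care with the order-counting and the finiteness of the zero set only makes explicit what the paper leaves as a parenthetical remark.
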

\bp
Write $f = p(H^*) q(H^*)^{-1} 1$ as in \eqref{eqcy1}.
Let $p^\cup(z) := \overline{p(\bar z)}$.
Then
\beq
\la f , x^t \ra &\=& \la  p(H^*) q(H^*)^{-1} 1, x^t \ra \\
&=& \la 1,  p^\cup(H) q^\cup (H)^{-1} 1 \ra \\
&=& \la 1, \frac{p^\cup ( \frac{1}{t+1})}{q^\cup ( \frac{1}{t+1})} x^t \ra\\
&\=&  \frac{p ( \frac{1}{\bar t+1})}{q ( \frac{1}{\bar t+1})} \la 1, x^t \ra .
\eeq
So the roots of $p$ that lie in $\D(1,1)$  are exactly the points $\{ z_i : 1 \leq i \leq m \}$. (Multiplicity is handled by
Proposition \ref{prge}). 
Factor $p$ as $p(z) = \prod_{i=1}^m (z - z_i) \tilde{p}(z)$, where $\tilde{p}$ has no roots in $\D(1,1)$.
Let $g = \tilde{p}(H^*) q(H^*)^{-1} 1$. Then $g$ is cyclic, and 
\eqref{eqcy2} holds.
\ep

Later we will need the next lemma.
\begin{lem}
\label{lemcy4}
Let $z \in \D(1,1)$. Then 
\[
(H^* - z) [ ( \bar z -1) H^* - \bar z ]^{-1} 
\]
is an isometry.
\end{lem}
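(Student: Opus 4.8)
The plan is to recognize the displayed operator as a Möbius (Blaschke) automorphism of the disk applied to the isometry $V := 1 - H^*$, and then use the classical fact that such an expression is again an isometry. Recall from Proposition~\ref{prbas1} that $V = 1-H^*$ is an isometry, so $V^*V = I$ and $\|V\| = 1$.

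The first step is the change of variable. Write $H^* = 1 - V$ and set $w := 1 - z$; since $z \in \D(1,1)$ means $|z-1| < 1$, we have $|w| < 1$. A direct computation gives $H^* - z = (1-z) - V = -(V - w)$ and $(\bar z - 1)H^* - \bar z = -1 - (\bar z - 1)V = -(1 - \bar w V)$, so the operator in the statement equals $(V - w)(1 - \bar w V)^{-1}$. The factor $1 - \bar w V$ is invertible because $\|\bar w V\| = |w| < 1$, with inverse given by the Neumann series $\sum_{n \geq 0} \bar w^{\,n} V^n$. (Equivalently, invertibility follows from the remark preceding Lemma~\ref{lemcy1}, writing $(\bar z - 1)H^* - \bar z = -\bar z\,(1 + s H^*)$ with $s = \tfrac{1-\bar z}{\bar z} \in \s$.)

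It then remains to verify that $T := (V - w)(1 - \bar w V)^{-1}$ satisfies $T^*T = I$. Taking adjoints, $T^* = (1 - w V^*)^{-1}(V^* - \bar w)$, so it suffices to establish the operator identity $(V^* - \bar w)(V - w) = (1 - w V^*)(1 - \bar w V)$; expanding and using $V^*V = I$, both sides equal $I - w V^* - \bar w V + |w|^2$. Hence $T^*T = (1 - w V^*)^{-1}(1 - w V^*)(1 - \bar w V)(1 - \bar w V)^{-1} = I$, which is exactly the assertion.

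The whole argument is routine computation; the only thing that needs care is the bookkeeping of the substitution $z \mapsto w = 1 - z$ together with the complex conjugates, so that the denominator comes out precisely as $1 - \bar w V$ (with $|w| < 1$), rather than in a form that is not manifestly invertible. Beyond that there is no real obstacle.
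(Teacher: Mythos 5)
Your proof is correct and is precisely the calculation the paper's one-line proof alludes to (``this follows by calculation, using the fact that $1-H^*$ is isometric''): the substitution $V=1-H^*$, $w=1-z$ turns the operator into the Blaschke-type expression $(V-w)(1-\bar w V)^{-1}$, and the identity $(V^*-\bar w)(V-w)=(1-wV^*)(1-\bar wV)$ with $V^*V=I$ gives $T^*T=I$. No issues.
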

\bp
This follows by calculation, using the fact that $1 - H^*$ is isometric.
\ep

\section{Proof of Theorem \ref{thmis}}
\label{secg}

Sufficiency is obvious. For necessity, 
let $\m$ be a proper closed subspace of $\ltwo$ that is invariant for $H$. We must show that it is a monomial space.
\begin{lem}
\label{lem51}
Let $\m$ be a finite dimensional subspace of $\ltwo$, of dimension $n+1$,  that is invariant for $H$.
Then $\m$ is a generalized finite monomial  space, i.e. there exist $n+1$ points $s_0, \dots, s_n$, with multiplicity allowed, 
so that $\m = \vee \{ x^{s_i} : 0 \leq i \leq n \}$.
\end{lem}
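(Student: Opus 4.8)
The plan is to combine the Jordan decomposition of $H|_\m$ with Proposition~\ref{prge}. First I would regard $\m$ as a finite-dimensional complex vector space carrying the operator $H|_\m$ and decompose it as the algebraic direct sum $\m = \bigoplus_{k=1}^{r} \m_k$ of its generalized eigenspaces, where $\lambda_1,\dots,\lambda_r$ are the distinct eigenvalues of $H|_\m$ and $\m_k = \ker (H|_\m - \lambda_k)^{\,n+1}$. Each $\m_k$ is invariant for $H|_\m$, hence for $H$. Any eigenvalue of $H|_\m$ is an eigenvalue of $H$, and since the monomials are the only eigenvectors of $H$, we may write $\lambda_k = \frac{1}{s_k+1}$ for distinct points $s_k \in \s$. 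Since $\m$ is $H$-invariant we have $(H|_\m - \lambda_k)^{\,n+1} = (H-\lambda_k)^{\,n+1}|_\m$, so $\m_k = \m \cap \ker (H-\lambda_k)^{\,n+1}$, and by Proposition~\ref{prge} the ambient kernel is the $(n+1)$-dimensional generalized monomial space $V_k := \vee\{x^{s_k},(\log x)x^{s_k},\dots,(\log x)^n x^{s_k}\}$.

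The step that carries the real content is identifying the nilpotent operator $N_k := (H-\lambda_k)|_{V_k}$ as a single Jordan block of size $n+1$. This follows because $\dim\ker N_k = \dim\big(V_k \cap \ker(H-\lambda_k)\big) = 1$, again using that $x^{s_k}$ is the only eigenvector of $H$ for $\lambda_k$; a nilpotent operator on an $(n+1)$-dimensional space with one-dimensional kernel is a single Jordan cell, and by Proposition~\ref{prge} its iterated kernels are exactly $\ker N_k^{\,m} = \vee\{x^{s_k},(\log x)x^{s_k},\dots,(\log x)^{m-1}x^{s_k}\}$ for $0 \le m \le n+1$. Now I would invoke the standard fact that the only subspaces of $V_k$ invariant under a single Jordan block are these $\ker N_k^{\,m}$: if $W \subseteq V_k$ is $N_k$-invariant and nonzero, take $m$ minimal with $W \subseteq \ker N_k^{\,m}$, pick $w \in W$ with $N_k^{\,m-1}w \neq 0$, and note that $w, N_k w, \dots, N_k^{\,m-1}w$ have strictly increasing ``degree'' in $\log x$, hence are independent and span $\ker N_k^{\,m}$, forcing $W = \ker N_k^{\,m}$.

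Applying this with $W = \m_k$ gives $\m_k = \vee\{x^{s_k},(\log x)x^{s_k},\dots,(\log x)^{m_k-1}x^{s_k}\}$ with $m_k = \dim\m_k$. Summing over $k$, $\m$ is the span of $\{(\log x)^j x^{s_k} : 1 \le k \le r,\ 0 \le j \le m_k-1\}$, which is precisely the generalized finite monomial space $\m(S)$ of \eqref{eqgms} for the multi-set $S$ in which $s_k$ occurs with multiplicity $m_k$; and $\sum_k m_k = \dim\m = n+1$. I do not anticipate a serious obstacle once Proposition~\ref{prge} is in hand, since everything reduces to linear algebra; the one point needing care, and where Proposition~\ref{prge} is essential, is verifying that each $N_k$ is a single Jordan cell, as that is what prevents $\m_k$ from being a smaller ``skew'' invariant subspace of $V_k$.
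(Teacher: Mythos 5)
Your proof is correct and follows essentially the same route as the paper: decompose $\m$ into generalized eigenspaces of $H|_\m$ and identify each one via Proposition~\ref{prge}. The only difference is that you make explicit a step the paper leaves implicit --- namely that, because $\ker(H-\lambda_k)$ is one-dimensional, the ambient nilpotent is a single Jordan cell whose only invariant subspaces are the iterated kernels $\vee\{x^{s_k},\dots,(\log x)^{m-1}x^{s_k}\}$ --- which is a worthwhile clarification but not a different argument.
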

\bp
Consider $H |_\m$, which leaves $\m$ invariant. The space $\m$ is spanned by the eigenvectors and generalized eigenvectors of $H$ that lie in $\m$.
Suppose the corresponding eigenvalues are $s_j$, with multiplicity $m_j$.
 By Proposition \ref{prge}, the generalized eigenvectors  are of the form
$x^{s_j}, (\log x) x^{s_j}, \dots, (\log x)^{m_j -1} x^{s_j}$. Therefore $\m$ is the generalized finite monomial 
space corresponding to the exponents $s_j$ with multiplicity $m_j$.
\ep
To prove the full theorem, we use the idea of wandering subspace, due to Halmos \cite{hal61}.
Let 
\[
k_0 \ := \ \min \{ k : e_k \notin \m \} .\]
Write $\n$ for $\m^\perp$.
Write $e_{k_0}  = \xi +\eta$, where $\xi \in \m$ and $\eta \in \n$.
The assumption that $ e_{k_0} \notin \m$ means $\eta \neq 0$. 
Let $u = \frac{\eta}{\|\eta\|}$.

\begin{lem}
\label{lem52}
We have $u \perp (1-H^*) \n$.
\end{lem}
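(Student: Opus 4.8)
The plan is to recognize that, in the orthonormal basis $\{e_n\}_{n\ge 0}$ of Theorem~\ref{thmbec1}, the operator $1-H^*$ is the unilateral shift. Indeed, by Proposition~\ref{prbas1} the operator $V:=1-H^*$ is a pure isometry of multiplicity one, and by Lemma~\ref{inv.lem.20a} we have $V^n 1 = e_n$, so $Ve_n=e_{n+1}$ for all $n\ge 0$. Hence its adjoint $1-H=V^*$ is the backward shift: $(1-H)e_n=e_{n-1}$ for $n\ge 1$ and $(1-H)e_0=0$. (The fact that $\ker(1-H)=\C e_0$ also follows directly from Lemma~\ref{lemon1}, since $\|(1-H)f\|^2=\|f\|^2-|\int_0^1 f|^2$ and equality in Cauchy--Schwarz forces $f$ constant.)

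\textbf{The claim is equivalent to $(1-H)\eta\in\m$.} For any $n\in\n$ we have
\[
\langle u,(1-H^*)n\rangle=\tfrac{1}{\|\eta\|}\langle\eta,(1-H^*)n\rangle=\tfrac{1}{\|\eta\|}\langle(1-H)\eta,n\rangle ,
\]
using $(1-H^*)^*=1-H$. So $u\perp(1-H^*)\n$ is equivalent to $(1-H)\eta$ being orthogonal to all of $\n=\m^\perp$, i.e.\ to $(1-H)\eta\in\n^\perp=\m$ (recall $\m$ is closed). Thus it suffices to show $(1-H)\eta\in\m$.

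\textbf{Proof that $(1-H)\eta\in\m$.} Write $\eta=e_{k_0}-\xi$ with $\xi\in\m$. Since $\m$ is invariant for $H$, it is invariant for $1-H=I-H$, so $(1-H)\xi\in\m$. On the other hand $(1-H)e_{k_0}=e_{k_0-1}$ if $k_0\ge 1$, and $(1-H)e_{k_0}=(1-H)e_0=0$ if $k_0=0$; in either case this vector lies in $\m$, because $e_0,\dots,e_{k_0-1}\in\m$ by the minimality in the definition of $k_0$. Therefore $(1-H)\eta=(1-H)e_{k_0}-(1-H)\xi\in\m$, as required.

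\textbf{Expected difficulty.} There is no substantial obstacle here: the lemma becomes a one-line computation once one identifies $1-H$ with the backward shift in the Laguerre basis and uses that $e_0,\dots,e_{k_0-1}\in\m$. The only points meriting care are the (harmless) degenerate case $k_0=0$, where $e_{k_0}=1\in\ker(1-H)$ and the argument still closes because $(1-H)\eta=-(1-H)\xi\in\m$, and the observation that $H$-invariance of $\m$ automatically gives $(1-H)$-invariance.
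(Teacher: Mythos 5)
Your proof is correct and follows essentially the same route as the paper: both arguments reduce the claim to showing that $(1-H)\eta$ (equivalently $(1-H)e_{k_0}$ modulo $\m$) lies in $\m$, using that $(1-H)e_{k_0}$ is either $0$ or $e_{k_0-1}\in\m$ by minimality of $k_0$, together with the $(1-H)$-invariance of $\m$. Your version merely makes explicit the step where the $\m$-component $\xi$ drops out, which the paper leaves implicit.
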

\bp
Let $f \in \n$. Then
\beq
\la u, (1 - H^*) f \ra &\=& \la \|\eta \|\  e_{k_0} ,  (1 - H^*) f \ra \\
&=& \|\eta \| \la (1-H) e_{k_0}, f \ra.
\eeq
If $k_0 = 0$, then $(1-H)  e_{k_0} = 0$. If $k_0 > 0$, then $(1-H)  e_{k_0} = e_{k_0 - 1} \in \m$.
Either way, the inner product with $f$ is $0$.
\ep

Define an operator $R$ in terms of the orthonormal basis $e_n$ from \eqref{eqbec1} by
\be
\label{eqbe1}
R: e_n \ \mapsto \ (1-H^*)^n u .
\ee
\begin{lem}
\label{lem53}
 The operator $R$ defined by \eqref{eqbe1} is an isometry from $\ltwo$ onto $\n$.
\end{lem}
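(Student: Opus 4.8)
The plan is to show that $R$ is a well-defined isometry on $\ltwo$ and that its range is exactly $\n$. Since $\{e_n\}$ is an orthonormal basis for $\ltwo$ by Theorem \ref{thmbec1}, defining $R$ on basis vectors and extending linearly makes sense \emph{provided} the vectors $(1-H^*)^n u$ are themselves orthonormal; then $R$ is automatically a well-defined linear isometry onto $\overline{\spn}\{(1-H^*)^n u : n \geq 0\}$. So the first step is to prove orthonormality of $\{(1-H^*)^n u\}$. Here I would use that $1-H^*$ is an isometry (Proposition \ref{prbas1}), so $\|(1-H^*)^n u\| = \|u\| = 1$ for all $n$, and for $m < n$,
\[
\la (1-H^*)^m u, (1-H^*)^n u \ra \= \la u, (1-H^*)^{n-m} u \ra,
\]
so it suffices to show $u \perp (1-H^*)^k u$ for all $k \geq 1$.

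The key input for this is Lemma \ref{lem52}, which gives $u \perp (1-H^*)\n$. Since $u \in \n$ and $\n = \m^\perp$ is invariant for $H^*$ (because $\m$ is invariant for $H$), we have $(1-H^*)^{k-1} u \in \n$ for every $k \geq 1$, hence $(1-H^*)^k u = (1-H^*)\big((1-H^*)^{k-1}u\big) \in (1-H^*)\n$, and therefore $u \perp (1-H^*)^k u$. This establishes that $\{(1-H^*)^n u\}$ is an orthonormal set, so $R$ is a well-defined isometry from $\ltwo$ into $\n$, with range $\m_u := \overline{\spn}\{(1-H^*)^n u : n \geq 0\}$.

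It remains to show $\ran R = \n$, i.e. $\m_u = \n$. The inclusion $\m_u \subseteq \n$ is clear since $u \in \n$ and $\n$ is closed and invariant under $1-H^*$. For the reverse inclusion I would argue that $\m_u^\perp \cap \n = 0$. Observe that $\m_u$ is, by construction, invariant under the isometry $V := 1-H^*$, and $u = V^0 u \in \m_u$ with $u \in \ker V^*$ (since $u \perp V\n \supseteq V\ltwo$... wait, one must be careful: we only know $u \perp V\n$, not $u \perp V\ltwo$). The cleaner route: decompose $\ltwo = \m_u \oplus \m_u^\perp$. Since $\m_u \subseteq \n$, we get $\n = \m_u \oplus (\m_u^\perp \cap \n)$. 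Suppose $g \in \m_u^\perp \cap \n$ with $g \neq 0$. Both $\n$ and $\m_u$ are invariant for $V = 1-H^*$; I claim $\m_u^\perp \cap \n$ is invariant for $V^*= 1-H$. Indeed if $g \perp \m_u$ then for any $n$, $\la V^* g, V^n u\ra = \la g, V^{n+1} u \ra = 0$, and also $\la V^* g, u \ra$: here use $g \in \n$ and the analogue of Lemma \ref{lem52}, namely $\la V^* g, u\ra = \|\eta\|\la (1-H)e_{k_0}, g\ra = 0$ as in the proof of Lemma \ref{lem52} — but more simply, $g \perp Vu$ would be needed; instead note $\la g, u\ra$ need not vanish.

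The robust argument is: restrict $V$ to $\n$, obtaining an isometry $V_\n$ on $\n$ (as $\n$ is $V$-invariant and $V$ is isometric). By Lemma \ref{lem52}, $u \perp V\n = \ran V_\n$, so $u \in \ker V_\n^*$; and since $u = \eta/\|\eta\|$ with $\eta$ the projection of $e_{k_0}$ onto $\n$, $u \neq 0$. Now $\n$ is invariant for $1-H$ as well (being the orthogonal complement of the $(1-H^*)$-invariant... no: $\m$ is $H$-invariant hence $H^*$-invariant on $\m^\perp$, i.e. $\n$ is $H^*$-invariant; but is $\n$ invariant for $1-H$? Not obviously). I would instead invoke part (ii) of the von Neumann–Wold Theorem \ref{thmvnw} applied to $V_\n$: if $V_\n$ were pure, then $\n = \bigvee\{V_\n^j w : w \in \ker V_\n^*\}$; the hard part is showing $\ker V_\n^*$ is one-dimensional (equal to $\C u$) and that $V_\n$ is pure. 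This is exactly where the structure of $\m$ being invariant for $H$ and the minimality in the definition of $k_0$ must be used, and I expect \textbf{this purity-and-multiplicity-one argument to be the main obstacle}: one must rule out any extra wandering vector and any unitary summand inside $\n$. Presumably the paper establishes this via $\m^\perp$ inheriting the pure-isometry structure from $(1-H^*)$ on $\ltwo$ together with the single defect vector $u$ picked out by $k_0$; granting that, $\n = \bigvee\{V^j u\} = \m_u = \ran R$, completing the proof.
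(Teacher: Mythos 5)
Your first half --- orthonormality of $\{(1-H^*)^n u\}$ via the isometry property of $1-H^*$, the $H^*$-invariance of $\n$, and Lemma \ref{lem52} --- is correct and is exactly the paper's argument. The surjectivity half, however, has a genuine gap that you yourself flag: you try to run von Neumann--Wold on the restricted isometry $V_\n = (1-H^*)|_\n$ and then cannot establish that $V_\n$ is pure of multiplicity one, leaving the "main obstacle" unresolved. That route is the wrong one, and the difficulty you encounter (e.g.\ your worry about whether $\n$ is invariant for $1-H$, which it need not be) is a symptom of it.

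The missing idea is to apply part (iii) of Theorem \ref{thmvnw} to the pure multiplicity-one isometry $1-H^*$ on \emph{all} of $\ltwo$, with $f = u$: this gives
\[
\ltwo \= \vee \{ (1-H^*)^n u : n \geq 0\} \ \vee\ \vee \{ (1-H)^m u : m \geq 1 \},
\]
with no purity or multiplicity question about any restricted operator. Now observe that Lemma \ref{lem52} says precisely that $(1-H)u \perp \n$, i.e.\ $(1-H)u \in \m$; since $\m$ is invariant for $H$ (hence for $1-H$), it follows that $(1-H)^m u \in \m$ for all $m \geq 1$. Thus the first span above sits inside $\n$ and the second inside $\m = \n^\perp$, and since together they span $\ltwo = \m \oplus \n$, orthogonality forces each containment to be an equality. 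In particular $\vee\{(1-H^*)^n u\} = \n$, which is the surjectivity of $R$. Your construction of $\m_u$ and the decomposition $\n = \m_u \oplus (\m_u^\perp \cap \n)$ are fine as far as they go, but without the global application of the Wold decomposition you have no way to kill $\m_u^\perp \cap \n$.
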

\bp
The functions $\{ (1-H^*)^n u : n \geq 0 \}$ form an orthonormal set.
Indeed, by Proposition \ref{prbas1} and Lemma \ref{lem52}, if $m \geq n$ then
\beq
\la (1-H^*)^m u, (1-H^*)^n u \ra &\=& \la (1-H^*)^{m-n} u,  u \ra \\
&=& \delta_{m,n} .
\eeq
As $R$ maps an orthonormal basis to an orthonormal set, it must be an isometry onto its range.

We know that the range of $R$ is contained in $\n$. To see that it is all of $\n$, observe that by
Lemma \ref{lem52}, we have that
\[
\vee \{ (1-H)^m u : m \geq 1 \}
\]
is contained in $\n^\perp = \m$. As $1-H$ is a pure isometry of multiplicity $1$, by Theorem \ref{thmvnw} for any non-zero vector $f$ the vectors
\[
\{ (1-H)^m f, (1-H^*)^n f : m,n \geq 0 \}
\]
span $\ltwo$. Therefore in particular, 
$\vee \{ (H^*)^n u : n \geq 0 \}$ and $ \vee \{ H^m u : m \geq 1 \}$ span $\ltwo$, so 
\beq
\n &\= & \vee \{ (H^*)^n u : n \geq 0 \} \\
\m &=& \vee \{ H^m u : m \geq 1 \} .
\eeq
\ep
Let us calculate $T = R^*$, the adjoint of $R$.
\begin{lem}
\label{lem54}
The adjoint of $R$ is given by the operator
\be
\label{eqbe2}
T : x^s \ \mapsto \ (1+s) \la x^s, u \ra x^s .
\ee
\end{lem}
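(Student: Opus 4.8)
The plan is to compute $T = R^*$ directly from its defining property $\langle Tf, g\rangle = \langle f, Rg\rangle$, testing against the Laguerre orthonormal basis $\{e_n\}$ of Theorem~\ref{thmbec1}. Since $R$ is bounded (indeed isometric) by Lemma~\ref{lem53}, the adjoint $T$ is a well-defined bounded operator, and to identify it it suffices to compute $\langle Tx^s, e_n\rangle$ for every $s \in \s$ and every $n \geq 0$, and then reconstruct $Tx^s$ from these coefficients.

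First I would use the definition $Re_n = (1-H^*)^n u$ together with the fact (Lemma~\ref{lemon1}, Proposition~\ref{prbas1}) that $1-H^*$ is the adjoint of the co-isometry $1-H$, to write
\[
\langle T x^s, e_n \rangle \;=\; \langle x^s, (1-H^*)^n u \rangle \;=\; \langle (1-H)^n x^s, u \rangle .
\]
Since $H x^s = \tfrac{1}{s+1} x^s$, we have $(1-H)x^s = \tfrac{s}{s+1} x^s$, hence $(1-H)^n x^s = \big(\tfrac{s}{s+1}\big)^n x^s$ and therefore $\langle T x^s, e_n\rangle = \big(\tfrac{s}{s+1}\big)^n \langle x^s, u\rangle$. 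Next I would compute the Fourier coefficients of $x^s$ itself in the same basis: using $e_n = (1-H^*)^n 1$ (the definition \eqref{eqbec1}, via Lemma~\ref{inv.lem.20a}) and the same adjoint manipulation,
\[
\langle x^s, e_n \rangle \;=\; \langle (1-H)^n x^s, 1 \rangle \;=\; \Big(\frac{s}{s+1}\Big)^n \int_0^1 x^s\,dx \;=\; \frac{s^n}{(s+1)^{n+1}} .
\]
Comparing the two displays gives the identity $\langle T x^s, e_n\rangle = (1+s)\,\langle x^s, u\rangle\,\langle x^s, e_n\rangle$, valid for all $n \geq 0$ (note this is a direct consequence of the two formulas, with no division, so the case $s=0$ needs no separate treatment). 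Since $\{e_n\}$ is an orthonormal basis, $T x^s = \sum_n \langle Tx^s, e_n\rangle e_n = (1+s)\langle x^s, u\rangle \sum_n \langle x^s, e_n\rangle e_n = (1+s)\langle x^s, u\rangle\, x^s$, which is \eqref{eqbe2}.

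I do not expect a real obstacle here: every inner product in sight is evaluated by pushing powers of $1-H^*$ across to the other slot and invoking the eigenvalue relation for $H$. The only point deserving a word of care is that $T = R^*$ is a priori known only to be bounded, so the identification must be carried out coefficient-by-coefficient against the complete orthonormal system $\{e_n\}$ and then reassembled, rather than by a formal manipulation on the non-orthogonal, non-complete family $\{x^s\}$; completeness of $\{e_n\}$ (Theorem~\ref{thmbec1}) is exactly what licenses the last step.
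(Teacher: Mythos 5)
Your proof is correct and follows essentially the same route as the paper: both arguments push powers of $1-H^*$ across the inner product, use $Hx^s=\tfrac{1}{s+1}x^s$ to evaluate $\la R^*x^s,e_n\ra$ and $\la x^s,e_n\ra$, and conclude by completeness of the Laguerre basis. The only difference is organizational — the paper computes $\la Tx^s,e_n\ra$ for the candidate operator $T$ directly rather than factoring out $\la x^s,e_n\ra$ — which is immaterial.
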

\bp
We have
\beq
\la R^* x^s, e_n \ra &\=& \la x^s , (1-H^*)^n u \ra
\\
&=& 
\la (1-H)^n x^s, u \ra \\
&=&
\left( \frac{s}{s+1} \right)^n \la x^s, u \ra .
\eeq
We also have by Lemma \ref{inv.lem.20a}
\beq
\la T x^s, e_n \ra &\=&  (1+s) \la x^s, u \ra \la x^s , (1-H^*)^n 1 \ra
\\
&=& 
(1+s) \la x^s, u \ra \la (1-H)^n  x^s , 1 \ra \\
&=&
\left( \frac{s}{s+1} \right)^n \la x^s, u \ra .
\eeq
Therefore $T = R^*$.
\ep
We want to approximate $\m$ by monomial spaces. We shall do this by approximating $u$ by linear combinations of monomials.
We have proved that $T$ is a co-isometry that commutes with $H$.
This means by Theorem \ref{thmcl2} that for each $N$, the matrix
\be
\label{eqbe3}
\notag
\left( \frac{1 - (i+1)(j+1)\la u, x^i \ra \la x^j, u \ra}{1 + i + j} \right)_{i,j=0}^N \ \geq \ 0 .
\ee
We shall assume for the remainder of this section  that $N$ is large enough that $\la u, x^i \ra \neq 0$ for some $i \leq N$. 
Let $C_N \geq 1$ be the largest number $C$ so that
\be
\label{eqbe4}
\notag
\left( \frac{1 - C^2 (i+1)(j+1)\la u, x^i \ra \la x^j, u \ra}{1 + i + j} \right)_{i,j=0}^N \ \geq \ 0 .
\ee
The hypothesis on $N$ means $C_N$ is finite, and $\lim_{N \to \infty} C_N = 1$.
Define $\tilde T_N$ by
\[
\tilde T_N : x^i \  \mapsto \ C_N (i+1) \la x^i , u \ra x^i, \qquad 0 \leq i \leq N .
\]
By Theorem \ref{thmcl2}, this extends to an operator  $T_N$ that maps $\ltwo$ to $\ltwo$, commutes with $H$, and has norm equal to $1$.
So $T_N$ is of the form 
\be
\label{eqbe71}
T_N : x^s \ \mapsto\  \alpha_N (s) x^s.
\ee
\begin{lem}
\label{lem55}
The function $\alpha_N(s)$ is a rational function of degree at most $N$,  and maps $\s$ to $\D$.
\end{lem}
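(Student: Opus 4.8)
The plan is to realize $\alpha_N$ as the transplant to $\s$, via the conformal map $z=\frac1{s+1}$ of $\s$ onto $\D(1,1)$, of the unique solution of a classical Nevanlinna--Pick problem whose Pick matrix is singular; the uniqueness-and-Blaschke clause of Nevanlinna--Pick interpolation then identifies that solution as a finite Blaschke product of degree at most $N$, and a non-constant finite Blaschke product carries $\D$ strictly inside itself. We may assume $N\ge1$. First, $\alpha_N(\s)\subseteq\overline{\D}$: applying Theorem~\ref{thmcl1} to a single point $s$ (legitimate since $T_N$ commutes with $H$ and $\norm{T_N}=1$) gives $\frac{1-|\alpha_N(s)|^2}{1+2\Re s}\ge0$, and $\Re s>-\tfrac12$ forces $|\alpha_N(s)|\le1$. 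Moreover $T_N$ has every $x^s$, $s\in\s$, as an eigenvector, so by \cite{amMO} (quoted in the introduction) $T_N=\phi_N(H)$ for some bounded analytic $\phi_N$ on $\D(1,1)$; comparing eigenvalues, $\alpha_N(s)=\phi_N\big(\frac1{s+1}\big)$, so in particular $\alpha_N$ is analytic on $\s$.

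Next I would check that the relevant Pick matrix is positive semidefinite and singular, of rank exactly $N$. By construction $\alpha_N(i)=C_N(i+1)\la x^i,u\ra$ for $0\le i\le N$, hence
\[
B_N\ :=\ \Big(\frac{1-\overline{\alpha_N(i)}\,\alpha_N(j)}{1+i+j}\Big)_{i,j=0}^N = A - C_N^2\,bb^*,
\]
where $A=\big(\frac1{1+i+j}\big)_{i,j=0}^N$ is the positive-definite Gram matrix of $x^0,\dots,x^N$ and $b=\big((i+1)\la x^i,u\ra\big)_{i=0}^N\ne0$ by the standing hypothesis on $N$. Since $C_N$ is the largest $C$ for which $A-C^2bb^*\ge0$, one has $C_N^2=1/\la A^{-1}b,b\ra$, whence $B_N(A^{-1}b)=0$, so $B_N$ is singular; and $\operatorname{rank}B_N\ge\operatorname{rank}A-\operatorname{rank}(bb^*)=N$, so $\operatorname{rank}B_N=N$.

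Then I would pass to the disk. Put $z_i=\frac1{i+1}$, $\zeta_i=z_i-1=\frac{-i}{i+1}\in\D$, and $\psi(\zeta):=\phi_N(\zeta+1)$, an analytic map of $\D$ into $\overline{\D}$ with $\psi(\zeta_i)=\alpha_N(i)$. Using $\frac1{1+i+j}=\la x^j,x^i\ra$ and the reciprocal-kernel identity $1+\bar s_i+s_j=\frac{1-\overline{\zeta_i}\zeta_j}{\overline{z_i}z_j}$ (with $s_i=i$) from the proof of Theorem~\ref{thmcl2}, one gets $B_N=D^*\Lambda D$ where $D=\operatorname{diag}(z_0,\dots,z_N)$ is invertible and $\Lambda=\big(\frac{1-\overline{\psi(\zeta_i)}\psi(\zeta_j)}{1-\overline{\zeta_i}\zeta_j}\big)$ is the classical Pick matrix of $\psi$ at the distinct nodes $\zeta_i$; hence $\Lambda\ge0$ and $\operatorname{rank}\Lambda=\operatorname{rank}B_N=N$, so $\Lambda$ is singular. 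By the classical Nevanlinna--Pick theorem (see, e.g., \cite{ampi}), a Pick problem at distinct nodes with positive semidefinite, singular Pick matrix has a unique solution in the closed unit ball of $H^\infty(\D)$, and it is a finite Blaschke product of degree at most $\operatorname{rank}\Lambda\le N$; so $\psi$ is such a product. It is non-constant, since a constant solution would make $\Lambda$ positive definite (constant of modulus $<1$) or zero (modulus $1$), contradicting $\operatorname{rank}\Lambda=N\ge1$; hence $\psi(\D)\subseteq\D$. Undoing the substitution, $\alpha_N(s)=\psi\big(\frac{-s}{s+1}\big)$ and $\frac{-s}{s+1}=\frac1{s+1}-1\in\D$ for $s\in\s$, so $\alpha_N(\s)\subseteq\D$; and $\alpha_N$, being $\psi$ composed with a M\"obius transformation, is rational of the same degree $\le N$.

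The main obstacle I anticipate is the appeal to the uniqueness-and-Blaschke clause of Nevanlinna--Pick interpolation: quoting it, or, if one prefers to keep the note self-contained, re-deriving from the von Neumann--Wold description of the pure isometry $1-H^*$ the fact that a singular Pick matrix of rank $r$ forces the unique interpolant to be a Blaschke product of degree $\le r$, together with the routine bookkeeping that the congruence $B_N=D^*\Lambda D$ by the invertible diagonal matrix $D$ preserves rank and positive semidefiniteness.
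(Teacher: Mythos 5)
The central algebraic step of your rank computation is wrong. The matrix in question is
\[
B_N \;=\; \Bigl(\tfrac{1 - C_N^2\,\overline{b_i}\,b_j}{1+i+j}\Bigr)_{i,j=0}^N,
\qquad b_j=(j+1)\langle x^j,u\rangle ,
\]
and the term subtracted from $A$ carries the factor $\tfrac{1}{1+i+j}$ in \emph{every} entry: it is the entrywise (Hadamard) product of $\bigl(\overline{b_i}b_j\bigr)$ with $A$, namely $D_b^*AD_b$ with $D_b=\operatorname{diag}(b_0,\dots,b_N)$, not the rank-one matrix $bb^*$. So $B_N\neq A-C_N^2\,bb^*$ (already for $N=1$ the $(1,1)$ entries differ by a factor of $3$), the formula $C_N^2=1/\langle A^{-1}b,b\rangle$ and the kernel vector $A^{-1}b$ are unjustified, and so is the bound $\operatorname{rank}B_N\ge \operatorname{rank}A-1=N$: the perturbation $D_b^*AD_b$ has rank equal to the number of nonzero $b_j$, which can be as large as $N+1$. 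Singularity of $B_N$ is easily rescued (maximality of $C_N$ plus continuity forces a nontrivial kernel), and singularity alone already yields, through your Nevanlinna--Pick argument, that $\psi$ is a Blaschke product of degree $\operatorname{rank}\Lambda\le N$, hence the degree bound. But your only route to $\alpha_N(\s)\subseteq\D$ (open disk) is that $\psi$ is nonconstant, which you deduce from $\operatorname{rank}\Lambda=N\ge1$; with the rank claim gone you have not excluded $\operatorname{rank}\Lambda=0$, i.e.\ $B_N=0$, i.e.\ $\alpha_N$ a unimodular constant, in which case that conclusion fails. (To be fair, the paper's own proof at this point also only literally establishes $|\alpha_N(s)|\le1$.)

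Beyond the gap, your route is genuinely different from the paper's. You transplant to the disk via $z=\tfrac1{s+1}$, invoke the representation $T_N=\phi_N(H)$ from the cited monomial-operator result to get analyticity, and then appeal to the uniqueness-and-Blaschke clause of classical Nevanlinna--Pick interpolation. The paper instead stays elementary: it takes any nonzero $\gamma\in\ker B_N$, uses the positivity of the one-column extension (Lemma \ref{lem56}) to obtain the identity $\sum_{i=0}^N\tfrac{1-\overline{\alpha_N(i)}\alpha_N(s)}{1+i+s}\gamma_i=0$ for all $s\in\s$, and reads off $\alpha_N=R/L$ as a ratio of two explicit rational functions of degree at most $N$. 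That argument needs only the existence of a kernel vector, not the exact rank, and avoids importing the Hardy-space/Blaschke machinery that this note is explicitly trying to do without. If you want to keep your approach, you should replace the rank-one computation by the maximality/continuity argument for singularity and supply a separate argument ruling out $B_N=0$ (or settle, as the paper implicitly does, for $\overline{\D}$).
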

\bp
We know that 
\be
\label{eqbe6}
\alpha_N(i) = C_N (i+1) \la u , x^i \ra, \qquad 0 \leq i \leq N,
\ee
Let $\gamma$ be a non-zero vector in the kernel of
\be
\label{eqbe5}
\notag
\left( \frac{1 - C_N^2 (i+1)(j+1)\la u, x^i \ra \la x^j, u \ra}{1 + i + j} \right)_{i,j=0}^N \ \geq \ 0 .
\ee
By Theorem \ref{thmcl1}, the matrix \eqref{eqcl2} has to be positive semidefinite when we augment the
set $\{ 0, \dots, N \}$ by any other point $s$. This means by Lemma \ref{lem56} that the first $N+1$ entries in the
 last column of the extended $(N+2)$-by-$(N+2)$ matrix must be orthogonal to $\gamma$, so
 \[
 \sum_{i=0}^N \frac{ 1 - \overline{\alpha_N (i)} \alpha_N(s)}{1 + i + s} \gamma_i \= 0 .
 \]
This equation yields
\be
\label{eqbe7}
\left(  \sum_{i=0}^N  \frac{  \overline{\alpha_N (i)} \gamma_i}{1 + i + s} \right)
\alpha_N(s)
\=
 \sum_{i=0}^N  \frac{   \gamma_i}{1 + i + s} 
\ee
Let $R(s)$ denote 
 the right-hand side of \eqref{eqbe7},  and  $L(s)$ denote the coefficient of $\alpha_N(s)$ on the left.
 Both $R$ and $L$ are 
rational functions, vanishing at infinity, with simple poles exactly in the set 
\[
\{ -1 - i : \gamma_i \neq 0 \} .
\]
Their ratio $\alpha_N = R/L$, therefore, is a rational function with poles at the zero set of $L$, and zeroes on the zero set of 
$R$. The degree will be at most $N$, since they both have zeroes at infinity.

As $\| T x^s \| = |\alpha_N (s) | \| x^s \| \leq \| x^s \|$, we have $\alpha_N : \s \to \D$. 
\ep 

We used the following lemma, whose proof is elementary linear algebra.
\begin{lem}
\label{lem56}
Suppose $A$ is a positive semi-definite matrix, and $\gamma$ is a non-zero vector in the kernel of $A$.
If there is a vector $\beta$ and a constant $c$ so that
\[
\begin{pmatrix}
A & \beta \\
\beta^* & c 
\end{pmatrix}
\ \geq \ 0 ,
\]
then $\la \beta, \gamma \ra = 0 $.
\end{lem}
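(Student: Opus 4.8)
The plan is to exploit the hypothesis that $M := \begin{pmatrix} A & \beta \\ \beta^* & c \end{pmatrix} \geq 0$ by evaluating the associated quadratic form on the one-parameter family of test vectors $v_t := \begin{pmatrix} \gamma \\ t \end{pmatrix}$, $t \in \C$, and then to derive a contradiction unless $\ip{\beta}{\gamma} = 0$.

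Carrying this out, and using $A\gamma = 0$ together with the convention that $\ip{x}{y}$ is conjugate-linear in the second slot (so that $\ip{x}{y}$ corresponds to $y^* x$ in matrix notation), one computes
\[
0 \ \leq \ v_t^* M v_t \= \gamma^* A \gamma + t\, \gamma^* \beta + \bar t\, \beta^* \gamma + |t|^2 c \= 2\,\Re\big(t\, \ip{\beta}{\gamma}\big) + |t|^2 c .
\]
Suppose now, for contradiction, that $\ip{\beta}{\gamma} \neq 0$, and set $t = -\varepsilon\, \overline{\ip{\beta}{\gamma}}$ with $\varepsilon > 0$ small. Then $t\,\ip{\beta}{\gamma} = -\varepsilon\, |\ip{\beta}{\gamma}|^2 \in \mathbb{R}$, so the right-hand side becomes $\varepsilon\, |\ip{\beta}{\gamma}|^2(\varepsilon c - 2)$, which is strictly negative once $\varepsilon$ is sufficiently small. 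This contradicts $M \geq 0$, forcing $\ip{\beta}{\gamma} = 0$.

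An alternative, even shorter, route would be to quote the standard structural fact that a Hermitian block matrix $\begin{pmatrix} A & \beta \\ \beta^* & c \end{pmatrix}$ can be positive semi-definite only if $\beta$ lies in the column space $\ran A$ (one way to see this is through the generalized Schur complement $c - \beta^* A^{\dagger} \beta$, with $A^{\dagger}$ the Moore--Penrose inverse), and then to note that $\ran A = (\ker A)^\perp$ since $A$ is Hermitian, while $\gamma \in \ker A$ by hypothesis. There is no genuine obstacle in either approach: the only step needing a moment's attention is the choice of the phase of the scalar $t$ so as to make the cross term $2\,\Re(t\,\ip{\beta}{\gamma})$ negative — over $\mathbb{R}$ this amounts to nothing more than picking the sign of $t$ opposite to that of $\ip{\beta}{\gamma}$.
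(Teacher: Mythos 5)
Your proof is correct, and it is exactly the elementary quadratic-form argument the paper has in mind (the paper omits the proof entirely, describing it only as ``elementary linear algebra''). The test-vector computation, the phase choice $t=-\varepsilon\,\overline{\ip{\beta}{\gamma}}$, and the resulting contradiction for small $\varepsilon>0$ are all sound.
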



\begin{lem}
\label{lem57}
Let $\alpha$ be a rational function of degree $N$ with all its poles  in the set $\{ s : \Re(s) <  - \frac{1}{2} \}$,
and with no pole at $\infty$.

(i) If $\alpha(-1) \neq 0$, 
then there exists a sequence $\{  s_0, \dots , s_{N} \}$, with multiplicity allowed, and a function
$u_N$ in the generalized finite monomial  space $\m(\{ s_0, s_1, \dots , s_{N} \})$, so that 
\be
\label{eqbe8}
\alpha(s) \= (1+s) \la x^s , u_N \ra \qquad \forall s \in \s.
\ee
Moreover we can take $s_0 = 0$.

(ii) If $\alpha(-1) = 0$, then there exists a sequence $\{  s_1, \dots , s_{N} \}$, with multiplicity allowed, and a function
$u_N$ in the generalized finite monomial  space $\m(\{  s_1, \dots , s_{N} \})$, so that 
\be
\label{eqbe8.1}
\alpha(s) \= (1+s) \la x^s , u_N \ra \qquad \forall s \in \s.
\ee
\end{lem}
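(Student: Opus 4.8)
The plan is to express the given rational function $\alpha$ in the form $(1+s)\langle x^s, u_N\rangle$ for a suitable $u_N$ living in a generalized finite monomial space, by reading off the poles and residues of $\alpha$. Concretely, write $\frac{\alpha(s)}{1+s}$ in partial fractions. Since $\alpha$ has degree $N$ with poles in $\{\Re(s) < -\frac12\}$ and no pole at $\infty$, the function $\frac{\alpha(s)}{1+s}$ is rational, vanishes at $\infty$, and has simple poles at $s=-1$ (from the factor $1+s$, with residue $\alpha(-1)$) and poles at the poles of $\alpha$, which lie in $\{\Re(s)<-\frac12\}$. Now recall the reproducing kernel identity $\langle x^s, x^{\bar t}\rangle = \frac{1}{1+s+\bar t}$ used in Section \ref{secd}, and more generally that for $p \in \s$ one has $\langle x^s, x^p \rangle = \frac{1}{1+s+\bar p}$; higher $\log$ powers $(\log x)^k x^p$ produce, after conjugation, derivatives in $\bar p$, i.e. terms $\frac{1}{(1+s+\bar p)^{k+1}}$ up to constants. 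So a pole of $\frac{\alpha(s)}{1+s}$ at $s = -1 - \bar p$ with $p \in \s$ — equivalently at a point whose real part is $< -\frac12$, which is exactly where the poles of $\alpha$ sit once we note $-1-\bar p$ has real part $-1 - \Re p < -\frac12 \iff \Re p > -\frac12$ — corresponds to including the monomial $x^p$ (and its $\log$ multiples, for a multiple pole) in $u_N$.

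The key steps, in order. First, in case (i) where $\alpha(-1)\neq 0$: the pole of $\frac{\alpha(s)}{1+s}$ at $s=-1$ contributes the term $\frac{\alpha(-1)}{1+s} = \alpha(-1)\langle x^s, x^0\rangle$, so the monomial $x^0 = 1$ appears in $u_N$ with coefficient $\overline{\alpha(-1)}$; this gives the claim $s_0 = 0$. For each pole $q$ of $\alpha$ (with $\Re q < -\frac12$) of multiplicity $m_q$, set $p = -1-\bar q$, which lies in $\s$; the principal part of $\frac{\alpha(s)}{1+s}$ at $s = -1-\bar p = q$ is a linear combination $\sum_{k=1}^{m_q} \frac{c_{q,k}}{(1+s+\bar p)^{k}}$, and one matches this against $\sum_{k=0}^{m_q-1} \bar b_{q,k}\,\langle x^s, (\log x)^k x^p\rangle$ by checking that $\langle x^s, (\log x)^k x^p \rangle$ is (up to a nonzero constant and lower-order terms) $\frac{(-1)^k k!}{(1+s+\bar p)^{k+1}}$, and then solving the resulting triangular linear system for the coefficients $b_{q,k}$. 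Define $u_N$ to be the sum of all these monomial-with-$\log$ terms over $q$, plus the constant term. Counting degrees: each simple pole uses one exponent, a pole of multiplicity $m_q$ uses $m_q$ exponents, and $s=-1$ uses one more; since $\frac{\alpha(s)}{1+s}$ has exactly $N+1$ poles counted with multiplicity (the $N$ poles of $\alpha$ plus the new simple pole at $-1$), we get exactly $N+1$ exponents $s_0=0, s_1, \dots, s_N$, so $u_N \in \m(\{s_0,\dots,s_N\})$. Finally verify \eqref{eqbe8}: both sides are rational in $s$, vanish at $\infty$, and by construction have the same principal part at every pole, hence are equal. Case (ii) is identical except the term at $s=-1$ is absent (since $\alpha(-1)=0$ cancels the pole there), so $\frac{\alpha(s)}{1+s}$ has only the $N$ poles of $\alpha$, yielding exponents $s_1,\dots,s_N$ with no forced $s_0 = 0$.

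I expect the main obstacle to be the bookkeeping for \emph{multiple} poles: one must be careful that the functions $(\log x)^k x^p$, after conjugation in the inner product, span precisely the space of principal parts $\frac{1}{(1+s+\bar p)^{j}}$, $1 \le j \le m_q$, with an invertible (triangular) transition matrix — this is essentially Proposition \ref{prge} read on the generalized-eigenvector side — and that solving for the coefficients stays within the finite-dimensional space $\m(\{\dots\})$ without needing extra exponents. A secondary point to check carefully is the correspondence $\Re q < -\frac12 \iff p := -1 - \bar q \in \s$, which is what guarantees the monomials we introduce actually lie in $\ltwo$; this is where the hypothesis that all poles of $\alpha$ have real part $< -\frac12$ is used in an essential way. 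The verification that the two sides of \eqref{eqbe8} agree is then a routine Liouville-type argument once the principal parts match.
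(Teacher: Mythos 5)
Your proposal is correct and follows essentially the same route as the paper: expand $\alpha(s)/(1+s)$ in partial fractions, identify each pole $q$ (including the pole at $-1$ in case (i)) with the exponent $p=-1-\bar q\in\s$, and match principal parts against the inner products $\la x^s,(\log x)^{r-1}x^p\ra = (-1)^{r-1}(r-1)!/(1+s+\bar p)^{r}$. The paper packages this matching as an inverse Laplace transform, which is the same computation after the substitution $x=e^{-t}$; your version makes explicit that the transition system is in fact diagonal, so no genuinely new ingredient is needed.
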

\bp
Expand $\alpha(s)/(s+1)$ by partial fractions to get
\[
\frac{\alpha(s)}{s+1} \=  \sum_{j=1}^p \sum_{r=1}^{m_j} \frac{(r-1)! c^r_j}{(s-\lambda_j)^r }.
\]
There is no constant term, since the left-hand side vanishes at $\infty$.
We can assume that $c_j^{m_j} \neq 0$ for each $j$.
In case (i), 
 there is a  pole, which we denote $\lambda_1$, at $-1$, and 
 $\sum_{j=1}^p m_j = N+1$.
 In case (ii) there is no pole at $-1$, and  $\sum_{j=1}^p m_j = N$.

The inverse Laplace transform of $\alpha(s)/(s+1)$ is 
\[
F(t) \= \sum_{j=1}^p \sum_{r=1}^{m_j}  c^r_j t^{r-1} e^{\lambda_j t} .
\]
Define
\begin{eqnarray}
\label{eqbe21}
\notag
u_N(x) &\=&  \frac{1}{x} \overline{F(\log \frac 1x)}
\\ &=& 
\sum_{j=1}^p \sum_{r=1}^{m_j}  \bar c^r_j ( \log \frac 1x)^{r-1} x^{-\bar \lambda_j -1}  .
\label{eqbe22}
\end{eqnarray}
Then, making the substitution $e^{-t} = x$, we get
\beq
\la x^s , u_N \ra &\=& \int_0^1 x^s \overline{u_N (x)} dx \\
&=& \int_0^\infty e^{-st} {F(t)}  dt \\
&=& ({\mathcal L}  F )(s)\\
&=& \frac{\alpha(s)}{s+1} .
\eeq
Notice that each point $-1-\bar \lambda_j$ is in $\s$.
We now define the multiset $\{s_0,   s_1, \dots , s_{N} \}$  (respectively, 
$\{s_1, \dots , s_{N} \}$ )
by taking $m_j$ copies of the point $-\bar \lambda_j - 1$ for each $j$.
\ep
We shall prove in Lemma \ref{lem59.1} that case (ii) cannot occur for $\alpha_N$.

\begin{lem}
\label{lem60}
Let $\K = {\rm ker}( 1 -  T_N T_N^*)$. Then $\K$ is $H^*$ invariant. 
\end{lem}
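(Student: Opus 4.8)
The plan is to show that $\K = \ker(1 - T_N T_N^*)$ is invariant for $H^*$ by exploiting the fact that $T_N$ commutes with $H$, hence $T_N^*$ commutes with $H^*$. First I would recall that $T_N$ commutes with $H$ and has norm one, so $1 - T_N T_N^* \geq 0$, and $\K$ is precisely the set of vectors $f$ with $\|T_N^* f\| = \|f\|$. The key observation is the operator identity: since $T_N H = H T_N$, taking adjoints gives $T_N^* H^* = H^* T_N^*$. Thus for $f \in \K$ we want to control $\|T_N^*(H^* f)\| = \|H^* T_N^* f\|$ and compare it with $\|H^* f\|$.

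The natural way to do this is to use that $1 - H^*$ is a pure isometry (Proposition \ref{prbas1}), which gives the norm identity $\|H^* g\|^2 = \|g\|^2 - \|(1-H^*)g\|^2 + \text{(cross terms)}$; more cleanly, $\|(1-H^*)g\| = \|g\|$ for all $g$, so one should work with $1 - H^*$ rather than $H^*$ directly. I would show instead that $\K$ is invariant under $1 - H^*$, which is equivalent since $H^* = 1 - (1-H^*)$ and $\K$ is a subspace: actually one wants both, so the cleanest route is to show $\K$ is invariant under $H^*$ by a direct computation. For $f \in \K$, write $g = T_N^* f$, so $\|g\| = \|f\|$. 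Since $T_N^*$ commutes with $H^*$, $T_N^*(H^* f) = H^* g$. Now $\|T_N^*(H^* f)\|^2 = \|H^* g\|^2$ and $\|H^* f\|^2$; the obstruction is that $\|H^* g\| \leq \|H^*\| \|g\|$ only gives an inequality in the wrong direction unless we use more structure. The right tool is the defect: $\|f\|^2 - \|H^* f\|^2 = \langle (1 - H H^*) f, f\rangle$, and since $1 - H^* $ is a pure isometry, $H H^* = H + H^* - (1-H^*)(1-H) \cdots$; more simply, $1 - H^* $ isometric means $(1-H)(1-H^*) = 1$, so $H H^* = H + H^* - 1 + (1-H)(1-H^*) - \cdots$. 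Rather than pushing symbols, the honest statement is: $1 - HH^* = (1-H) + (1-H^*) - (1-H)(1-H^*) = (1-H)+(1-H^*) - 1 \geq 0$ fails in general, so instead I would use that $HH^* \le 1$ with equality defect equal to the rank-one projection onto $\mathrm{ker}(1-H^*)^{\perp}$'s complement — precisely, $1 - HH^* $ is the projection onto $\vee\{e_n : n \ge 1\}$... the point is that $H^*$ is a contraction and $\K$ being a reducing-type subspace for the contraction $T_N$ interacts well.

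The cleanest correct argument, which I would actually carry out, is this: $\K = \ker(1 - T_N T_N^*) = \overline{\ran}(T_N)^{\perp\perp}$-type object is NOT automatically $H^*$-invariant; what saves us is that $T_N$ is a co-isometry on $\K$ in the sense that $T_N T_N^* = 1$ on $\K$. Since $H^*$ commutes with $T_N^*$, and $T_N$ with $H$: for $f\in\K$, $(1-T_NT_N^*)H^*f = H^*(1 - T_N T_N^*)f = 0$ \emph{provided} $T_N T_N^*$ commutes with $H^*$. But $T_N T_N^*$ commutes with $H^*$ iff $T_N T_N^*$ commutes with $H$, and while $T_N$ and $T_N^*$ each commute with $H$, we need $T_N^*$ to commute with $H$ — and indeed $T_N^*$ commutes with $H$ is false in general; $T_N^*$ commutes with $H^*$. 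So $T_N T_N^*$ commutes with $H^*$? We have $T_N T_N^* H^* = T_N H^* T_N^* $, and $T_N H^* \neq H^* T_N$ in general. Hence the naive commutation fails, and \textbf{this is the main obstacle}: one cannot simply commute $1 - T_N T_N^*$ past $H^*$.

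To overcome it, I would instead argue via the structure of $T_N^*$. Recall $T = R^*$ and $R$ is the isometry $e_n \mapsto (1-H^*)^n u$ with range $\n = \m^\perp$. The finite-rank perturbation $T_N$ is built to approximate $T$; its defect operator $1 - T_N T_N^*$ should have range related to a monomial space. Concretely, $1 - T_N^* T_N$ has rank governed by the Pick matrix's kernel, and by Theorem \ref{thmcl2} and the partial-fraction analysis, $\ker(1 - T_N T_N^*)$ will turn out to be spanned by specific generalized eigenvectors $x^{s_i}$. The robust way to prove $H^*$-invariance: show $\K = (T_N \ltwo)^{\perp} {}^{\perp}$... let me instead use that $T_N$ being a partial isometry-like contraction commuting with $H$, the subspace $\overline{T_N \ltwo}$ is $H$-invariant (since $T_N$ commutes with $H$), hence its orthogonal complement $\ker T_N^*$ is $H^*$-invariant; but $\K \subseteq \ker(1-T_NT_N^*)$ is the \emph{maximal subspace on which $T_N^*$ is isometric}, equivalently $\{f : \|T_N^* f\| = \|f\|\}$, which by a standard fact equals $\bigcap_n \ker(T_N^{*n} - \text{isometric part})$... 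The expected clean proof: the co-isometry $T_N$ has a von Neumann–Wold-type splitting $\ltwo = \K_u \oplus \K$ where $\K = \ker(1 - T_N T_N^*)$ is the part on which $T_N^*$ is unitary and $\K_u$ where it is a pure shift; and since $T_N$ commutes with $H$ and $T_N^*$ with $H^*$, this canonical splitting is preserved by $H^*$ because it is the canonical splitting of the contraction $T_N^*$, which commutes with $H^*$ — and the unitary part of a contraction that commutes with $H^*$ is itself $H^*$-reducing. So I would invoke: for any contraction $A$, the subspace $\{f : \|A^n f\| = \|f\| = \|A^{*n} f\| \ \forall n\}$ (the maximal reducing subspace on which $A$ is unitary) is $B$-invariant for every $B$ commuting with $A$; apply with $A = T_N^*$ and $B = H^*$, and check $\K$ coincides with this space using that $T_N$ is a co-isometry (so $T_N^{*n}$ is already isometric, collapsing the conditions to $\|T_N^n f\| = \|f\|$, i.e. $f \in \K$). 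The main obstacle, then, is verifying that $\K = \ker(1 - T_N T_N^*)$ is exactly the maximal unitary subspace of $T_N^*$ — which follows because $T_N$ is a co-isometry, so $T_N T_N^* = 1$ identically, making $\K = \ltwo$; wait, that would make the lemma trivial. Since $T_N$ has norm one and equals $R^*$-type co-isometry only in the limit, $T_N$ is genuinely a contraction with nontrivial defect, and I would confirm $1 - T_N T_N^*$ has finite rank equal to $N+1 - \dim\K_u$, then conclude $\K$ is $H^*$-invariant from the commutation of the canonical decomposition.
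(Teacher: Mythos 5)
Your proposal does not arrive at a proof: after correctly observing that the naive commutation $(1-T_NT_N^*)H^* = H^*(1-T_NT_N^*)$ fails (because $T_N^*$ commutes with $H^*$ but not with $H$), you fall back on an appeal to a ``standard fact'' that the maximal unitary subspace of the contraction $T_N^*$ is invariant under everything in its commutant, applied with $B=H^*$. This has two genuine gaps. First, that hyperinvariance claim is not standard and is not proved; for an arbitrary contraction $A$ and arbitrary $B$ with $AB=BA$, the inequality $\|A^nBf\|=\|BA^nf\|\le\|B\|\,\|f\|$ runs in the wrong direction and does not show $Bf$ lies in the unitary part. Second, even granting it, you would need to identify $\K=\ker(1-T_NT_N^*)$ (an equality of norms at the \emph{first} step only) with the maximal unitary subspace $\bigcap_n\ker(1-T_N^nT_N^{*n})$; your attempt to do so invokes ``$T_N$ is a co-isometry,'' but that is precisely Lemma \ref{lem58}, whose proof in the paper \emph{uses} the present lemma. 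You notice this yourself (``wait, that would make the lemma trivial'') but do not resolve it, so the argument is circular as written.

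The idea you are missing is to bring in the structure of $H$ itself via Lemma \ref{lemon1}: since $(1-H)(1-H^*)=1$ and $T_N$ commutes with $H$, one has $T_N=(1-H)T_N(1-H^*)$ and hence $T_N^*=(1-H)T_N^*(1-H^*)$. For $g\in\K$ this gives
\[
\|g\|^2=\|T_N^*g\|^2=\la (1-H)T_N^*(1-H^*)g,\;T_N^*g\ra
\le \|T_N^*(1-H^*)g\|\,\|g\|\le\|(1-H^*)g\|\,\|g\|=\|g\|^2,
\]
using $\|1-H\|=\|T_N^*\|=1$ and the isometry of $1-H^*$. Equality throughout forces $\|T_N^*(1-H^*)g\|=\|(1-H^*)g\|$, i.e.\ $(1-H^*)g\in\K$, and since $\K$ is a subspace containing $g$, also $H^*g\in\K$. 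This is exactly the equality-case/norm-chain argument your write-up circles around (you even write down $(1-H)(1-H^*)=1$) but never executes.
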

\bp
As $H$ commutes with $T_N$ and $(1-H)(1-H^*) = 1$ by Lemma \ref{lemon1}, we have
\[
T_N \= (1-H)T_N (1 - H^*) .
\]
So if $g \in \K$ then
\beq
\| g \|^2 &\= & \| T_N^* g \|^2 \\
&\=& \la (1 - H) T_N^* (1 -H^*) g, T_N^*g \ra .
\eeq
As $\| 1 - H^* \|$ and $\| T_N^*\|$ are both equal to $1$, we have 
\beq
\| T_N^* (1 - H^*) g \| &\=& \| (1-H^*) g \| \\
&\=& \| g \| .
\eeq
Therefore $(1-H^*)g$ is also in $\K$, and hence $\K$ is $H^*$ invariant.
\ep

\begin{lem}
\label{lem58}
The operator $T_N$ is a co-isometry.
\end{lem}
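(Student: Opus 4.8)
The plan is to show that $T_N$ is surjective; since it already has norm one and commutes with $H$, being a co-isometry is equivalent to $\ker(1 - T_N T_N^*) = \ltwo$, i.e. to $\K = \ltwo$ in the notation of Lemma \ref{lem60}. First I would recall that $T_N$ acts as $x^s \mapsto \alpha_N(s) x^s$, where $\alpha_N$ is by Lemma \ref{lem55} a rational function of degree at most $N$ mapping $\s$ into $\D$, with poles confined to $\{\Re(s) < -\tfrac12\}$; in particular $|\alpha_N|$ extends continuously to the boundary line $\Re(s) = -\tfrac12$. The key point is to compute $1 - T_N T_N^*$ on the eigenvectors: since $T_N^* : x^s \mapsto \overline{\alpha_N(s)} x^s$ on the span of eigenvectors and $H$ is (unitarily) a rank-one perturbation of a shift, the defect operator $1 - T_N T_N^*$ should be rank one, reflecting the fact that $\alpha_N$ is (after the Cayley-type change of variable $z = \tfrac{1}{s+1}$ carrying $\s$ to $\D(1,1)$) a rational function of modulus one on the boundary circle — a finite Blaschke product. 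Indeed Lemma \ref{lem55}'s proof exhibits $\alpha_N = R/L$ with $R$ and $L$ sharing the same poles, and the positivity of the Pick-type matrix forces $|\alpha_N| \le 1$ on $\s$ with equality on $\partial\s$; a finite Blaschke product of degree $\le N$ pulled back to $H$ gives a co-isometry.

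Concretely, the cleanest route is to use Lemma \ref{lemcy4}. Factor $\alpha_N$, viewed in the variable $z = \tfrac{1}{s+1} \in \D(1,1)$, as a constant of modulus one times a product of elementary factors $\tfrac{z - z_i}{(\bar z_i - 1) z - \bar z_i}$ for finitely many $z_i \in \D(1,1)$ (together, in case $\alpha_N$ has zeros on $\partial\D(1,1)$, with factors $(z - z_i)$ having dense range by the argument in Lemma \ref{lemcy2}). By Lemma \ref{lemcy4} each elementary factor, evaluated at $H^*$, is an isometry; their product is an isometry; and since $\alpha_N$ has no zeros strictly outside $\overline{\D(1,1)}$ that fail to cancel (the poles and zeros were matched in Lemma \ref{lem55}), the operator $T_N^* = \alpha_N(H^*)$ — interpreted via this factorization — is an isometry, hence $T_N$ is a co-isometry. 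I would verify that $\alpha_N(H^*)$ so defined does agree with the operator $T_N$ produced by Theorem \ref{thmcl2}: both commute with $H$ and both send $x^s$ to $\alpha_N(s) x^s$, and an operator commuting with $H$ is determined by its action on the monomials (which are dense by M\"untz--Sz\'asz type reasoning, or simply because $\vee\{x^s : s \in \s\} = \ltwo$).

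I expect the main obstacle to be bookkeeping at the boundary: $\alpha_N$ may have zeros on $\partial\D(1,1)$ (equivalently on $\partial\s$), where the corresponding factor $H^* - z_i$ is not invertible but only has dense range, so one gets a co-isometry only "up to" an operator with dense range. The resolution is exactly as in the proof of Lemma \ref{lemcy2}: an operator of the form (isometry) composed with (dense-range factor) is still a co-isometry precisely when the dense-range part is on the correct side — here $T_N^*$ is the isometry and the dense-range factors, being self-adjoint up to scalars, contribute $1 - T_N T_N^*$ supported away from the defect, so the product $T_N$ remains a co-isometry. A secondary point to nail down is that the modulus of $\alpha_N$ really is identically one on $\partial\s$ and not merely $\le 1$; this follows because a strict inequality on a set of positive measure, combined with $\alpha_N$ rational of degree $\le N$ and the fact that $\alpha_N$ agrees with $(1+s)\langle x^s, u\rangle$ in the limit (Lemma \ref{lem54}, $C_N \to 1$), would force the defect of $T_N$ to be infinite-dimensional, contradicting the rank bound coming from degree $\le N$. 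Once these boundary issues are dispatched, the conclusion $T_N T_N^* = 1$ is immediate.
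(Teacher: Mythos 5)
Your overall strategy---show that $\alpha_N$ is, in the variable $z=\frac{1}{s+1}$, a finite Blaschke product for $\D(1,1)$ and then factor $T_N^*$ into the elementary isometries of Lemma \ref{lemcy4}---could in principle be made to work, but the step on which everything hinges is exactly the one you leave unproved: that $|\alpha_N|\equiv 1$ on $\partial\s$. Positivity of the Pick-type matrix only gives $\alpha_N:\s\to\overline{\D}$; it does not force unimodular boundary values. The constant function $\alpha\equiv\frac12$ satisfies every hypothesis you actually use (rational of degree at most $N$, poles in $\{\Re s<-\frac12\}$, maps $\s$ into $\D$, gives an operator of norm one commuting with $H$) and yet yields a $T$ that is not a co-isometry. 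Your proposed justification---that a non-unimodular boundary modulus ``would force the defect of $T_N$ to be infinite-dimensional, contradicting the rank bound coming from degree $\le N$''---rests on a rank bound that is never established and is in fact false: for $\alpha\equiv\frac12$ the defect $1-TT^*$ equals $\frac34 I$, which has infinite rank even though the degree is $0$. So as written the argument is circular at its key point.

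The ingredient you are missing is the extremality of $C_N$. Because $C_N$ was chosen to be the \emph{largest} constant for which the matrix is positive semidefinite, that matrix is singular, and its kernel vector $\gamma$ produces a concrete polynomial $f=\sum_{j=0}^N\gamma_j x^j$ with $(1-T_N^*T_N)f=0$, i.e.\ a vector on which $T_N$ attains its norm. This is what the paper exploits: setting $g=T_Nf$, one gets $g\in{\rm ker}(1-T_NT_N^*)$, which by Lemma \ref{lem60} is $H^*$-invariant; the cyclicity machinery of Section \ref{seccyc} (Lemmas \ref{lemcy2}, \ref{lemcy3} and \ref{lemcy4}) then upgrades membership of the single vector $g$ to ${\rm ker}(1-T_NT_N^*)=\ltwo$. (Your Lemma \ref{lemcy4} factorization does appear there, but applied to the finitely many zeros of $\la g,x^t\ra$ in $\s$, not to a hypothetical Blaschke factorization of $\alpha_N$.) If you want to salvage your route, you would first have to deduce from the kernel vector $\gamma$ that the extremal Pick interpolant is a Blaschke product of degree at most $N$, which is essentially equivalent to the lemma itself; the norm-attainment argument is the more economical path.
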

\bp
Let $\gamma$ be as in the proof of Lemma \ref{lem55}. Let
$f(x) \= \sum_{j=0}^N \gamma_j x^j$.
Then $(1- T_N^* T_N) f = 0$, so $T_N$ attains its norm on $f$.
Let
\[
g  \= T_N f  \=  \sum_{j=0}^N \gamma_j \alpha(j)  x^j .
\]
As $f = T_N^* T_N f$, we have $g = T_N T_N^* g$.

To prove  $T_N$ is a co-isometry, we must show that
\[
\K \= {\rm ker}( 1 -  T_N T_N^*) 
\]
is all of $\ltwo$. 
By Lemma \ref{lem60}, we know that $\K$ is $H^*$ invariant, and it contains the polynomial $g$.
If $g$ were not orthogonal to any $x^t$, then we would be done by Lemma \ref{lemcy2}.

As $\la x^t , g \ra$ is a non-zero rational function of $t$, it can only have finitely many zeroes in $\s$;
label these $\{ t_1, \dots, t_m \}$, counting multiplicity. 
By Lemma \ref{lemcy3} we have
\be
\label{eqbe9}
\notag
g \= \prod_{i=1}^m (H^* - z_i) h_1 ,
\ee
where $h_1$ is cyclic for $H^*$ and $z_i = \frac{1}{1 + \bar t_i}$.
Let 
\[
h_2 \= \prod_{i=1}^m [ (\bar z_i -1) H^* - \bar z_i ]\  h_1 .
\]
Then $h_2$ is cyclic since it is an invertible operator applied to $h_1$.
Let 
\[
r(z) \=  \prod_{i=1}^m \frac{ z - z_i} { (\bar z_i -1) z - \bar z_i} .
\]
By Lemma \ref{lemcy4}, $r(H^*)$ is an isometry, and we have $r(H^*) h_2 = g $.
Therefore 
\beq
\| h_2 \| & \= & \| r(H^*) h_2 \| \\
&=& \| T_N T_N^*  r(H^*) h_2 \| \\
&\leq & \|  T_N^*  r(H^*) h_2 \| \\
&=& \|  r(H^*)  T_N^* h_2 \| \\
&=&  \|   T_N^* h_2 \| \\
&\leq & \| h_2 \| .
\eeq
Therefore $h_2 \in \K$, and since $\K$ is $H^*$ invariant and $h_2$ is cyclic, we get
that $\K$ is all of $\ltwo$ and hence $T_N^*$ is an isometry.
\ep
Let $R_N = T_N^*$. A similar calculation to the proof of Lemma \ref{lem54} yields:
\begin{lem}
\label{lem60}
The operator $R_N$ maps $e_n$ to $(1-H^*)^n u_N$.
\end{lem}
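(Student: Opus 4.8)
The plan is to follow the template of the proof of Lemma~\ref{lem54}, testing both $R_N e_n$ and $(1-H^*)^n u_N$ against the monomials $x^s$, whose closed span is all of $\ltwo$ (the polynomials alone are already dense in $L^2[0,1]$).

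First I would apply Lemma~\ref{lem57} to $\alpha = \alpha_N$. By Lemma~\ref{lem55}, $\alpha_N$ is rational of degree at most $N$ with no pole at $\infty$, and it maps $\s$ into $\D$. Being bounded on the open half plane $\s = \{\Re s > -\tfrac12\}$, it can have no pole on the closed half plane $\{\Re s \geq -\tfrac12\}$: a pole there would be a limit of points of $\s$ near which $|\alpha_N|\to\infty$, contradicting boundedness. Hence all poles of $\alpha_N$ lie in $\{\Re s < -\tfrac12\}$, and Lemma~\ref{lem57} produces a function $u_N$, lying in a generalized finite monomial space, with
\[
\alpha_N(s) \= (1+s)\,\la x^s, u_N\ra \qquad \text{for all } s \in \s .
\]
(We need not decide here whether case (i) or case (ii) of Lemma~\ref{lem57} occurs; case (ii) will be excluded for $\alpha_N$ in Lemma~\ref{lem59.1}, but the displayed identity holds in either case, and this $u_N$ is the function named in the statement.)

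Next I would fix $s \in \s$ and $n \geq 0$ and pair each side with $x^s$. Using $R_N = T_N^*$, the form \eqref{eqbe71} of $T_N$, the identity $e_n = (1-H^*)^n 1$ from Lemma~\ref{inv.lem.20a}, together with $(1-H)x^s = \tfrac{s}{s+1}\,x^s$ and $\la x^s, 1\ra = \tfrac1{1+s}$,
\[
\la x^s, R_N e_n\ra \= \la T_N x^s, e_n\ra \= \alpha_N(s)\,\la x^s, (1-H^*)^n 1\ra \= \alpha_N(s)\,\la (1-H)^n x^s, 1\ra \= \alpha_N(s)\Bigl(\frac{s}{s+1}\Bigr)^{\!n}\frac1{1+s} .
\]
On the other hand, since $\la x^s, u_N\ra = \tfrac{\alpha_N(s)}{1+s}$ by the displayed identity,
\[
\la x^s, (1-H^*)^n u_N\ra \= \la (1-H)^n x^s, u_N\ra \= \Bigl(\frac{s}{s+1}\Bigr)^{\!n}\la x^s, u_N\ra \= \Bigl(\frac{s}{s+1}\Bigr)^{\!n}\frac{\alpha_N(s)}{1+s} .
\]
The two expressions coincide for every $s\in\s$, and since $\{x^s : s \in \s\}$ is total in $\ltwo$, we conclude $R_N e_n = (1-H^*)^n u_N$.

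There is no real obstacle here. The only points requiring any care are the verification that $\alpha_N$ meets the hypotheses of Lemma~\ref{lem57} (that its poles are confined to $\{\Re s < -\tfrac12\}$, handled above) and keeping the complex conjugations straight; I have deliberately put $x^s$ in the first slot of every pairing so that $\alpha_N(s)$ is never conjugated, which is exactly what makes the two computations agree on the nose.
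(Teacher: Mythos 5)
Your proposal is correct and is precisely the argument the paper intends: the paper omits the proof with the remark that it is ``a similar calculation to the proof of Lemma \ref{lem54},'' and you carry out exactly that calculation, pairing both sides against the monomials $x^s$ and using their totality in $\ltwo$. Your preliminary check that $\alpha_N$ satisfies the pole hypothesis of Lemma \ref{lem57} is a sensible (and correct) filling-in of a step the paper leaves implicit.
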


We are now ready to define $\m_N$. Let $\alpha_N$ be as in \eqref{eqbe71}.
Apply Lemma \ref{lem57} to $\alpha_N$ to get, in case (i),  
a space  $\m(\{ s_0, s_1, \dots , s_{N} \})$ that contains $u_N$
given by \eqref{eqbe22} and satisfies \eqref{eqbe8},
and in case (ii) a space  $\m(\{  s_1, \dots , s_{N} \})$ that contains $u_N$
given by \eqref{eqbe22} and satisfies \eqref{eqbe8.1}.

We show that Case (ii)  of  Lemma \ref{lem57} cannot occur.
\begin{lem}
\label{lem59.1}
We have $\alpha_N(-1) \neq 0$.
\end{lem}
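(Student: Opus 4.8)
The plan is to exploit the special role of the constant function $1 = e_0$ and the fact that $\alpha_N(-1)$ is, up to a normalization, the pairing of $u_N$ with the "kernel at $-1$". Recall from Lemma \ref{lem57} that $\alpha_N(s) = (1+s)\la x^s, u_N\ra$ for all $s \in \s$, and that $u_N \in \n = \m^\perp$ by construction (since $R_N$ maps onto the analogue of $\n$ for the finite monomial space, but more directly: $u_N$ was built to mimic $u$, and the whole point is that $T_N \to T$). The quantity $\alpha_N(-1)$ is the analytic continuation to $s=-1$ of $(1+s)\la x^s, u_N\ra$; since $\la x^s, u_N\ra = \alpha_N(s)/(s+1)$ has, in case (ii), no pole at $s=-1$, we would have $\alpha_N(-1)=0$ forcing $\la x^s,u_N\ra$ to be regular there. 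I want to rule this out by showing $\la x^s, u_N\ra$ genuinely has a simple pole at $s=-1$, equivalently that the residue — which is a constant multiple of $\int_0^1 \overline{u_N(x)}\,dx$ in the Laplace-transform picture, i.e. $\la 1, u_N\ra$ — is nonzero.

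So the key step is: \textbf{show $\la 1, u_N\ra \neq 0$.} Here is how I would argue it. By Lemma \ref{lem60}, $R_N e_n = (1-H^*)^n u_N$, and $R_N = T_N^*$ is an isometry (Lemma \ref{lem58}); in particular $\{(1-H^*)^n u_N : n \geq 0\}$ is an orthonormal set whose span is an $H^*$-invariant subspace — call it $\n_N$ — and $u_N = R_N e_0$. Now apply Lemma \ref{lemon1} (or equivalently the co-isometry property of $1-H$) to $u_N$: we get $\norm{u_N}^2 = \norm{(1-H)u_N}^2 + |\la u_N, 1\ra|^2$, but $\norm{u_N}=1$ and $\norm{(1-H)u_N} \leq 1$, so this alone is not decisive. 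Instead I would use the structural fact established for $u$ itself (Lemma \ref{lem52} and its consequences) in reverse: $u_N \perp (1-H^*)\n_N$ by the same orthonormality computation, so the compression of $1-H$ to $\n_N$ is a pure co-isometry with $u_N$ spanning its kernel. If $\la u_N, 1\ra = 0$, then $1 \perp u_N$ and (since $1 = e_0$ is the bottom of the Laguerre tower and $1-H$ kills it) one derives $1 \in \n_N^\perp$... Actually the cleanest route: by Lemma \ref{lem57}(ii), if $\alpha_N(-1)=0$ then $\deg$ counts give $\sum m_j = N$, so $u_N \in \m(\{s_1,\dots,s_N\})$, an $N$-dimensional space none of whose points equals $0$ (since $-1-\bar\lambda_j \neq 0 \iff \lambda_j \neq -1$, which holds as there is no pole at $-1$). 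Then $\la 1, u_N\ra = \overline{\la u_N, 1\ra}$, and $\la x^s, u_N\ra = \alpha_N(s)/(s+1)$ evaluated at $s=0$ gives $\la 1, u_N\ra = \alpha_N(0)$. But $\alpha_N(0) = C_N \cdot 1 \cdot \la u, x^0\ra = C_N \la u, 1\ra$ by \eqref{eqbe6}, and by hypothesis (stated just before the definition of $C_N$) $N$ is large enough that $\la u, x^i\ra \neq 0$ for some $i \leq N$; one must additionally know $\la u, 1\ra \neq 0$.

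That last point — \textbf{$\la u, 1\ra \neq 0$, i.e. $\alpha_N(0) \neq 0$} — is where the real content sits, and it is the main obstacle. Here is the argument I would give: $u$ is the normalized projection of $e_{k_0}$ onto $\n = \m^\perp$, where $k_0 = \min\{k : e_k \notin \m\}$. By Lemma \ref{lem54}, $T e_n = \la Tx^s,\dots\ra$ records $\la e_n, u\ra$ via $T^*$; concretely $\la u, 1\ra = \la u, e_0\ra$ (since $e_0 = 1$). If $\la u, e_0\ra = 0$, then $e_0 = 1 \in u^\perp$; but we also showed $\m = \vee\{H^m u : m \geq 1\}$ in Lemma \ref{lem53}, so $\n = \vee\{(H^*)^n u : n \geq 0\}$, and $1 \perp u$ would combine with $H^* 1 = -\log x$ etc. to... — more directly, $1 \in \m$ would follow (since the bottom Laguerre vector $e_0$ lies in $\m$ precisely when $k_0 > 0$), contradicting nothing by itself. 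The honest resolution: note $\alpha_N \to \alpha$ where $\alpha(s) = (1+s)\la x^s, u\ra$ is the symbol of the co-isometry $T = R^*$; since $R e_0 = u$ and $R$ is an isometry onto $\n$, and $1 = e_0$, we have $T 1 = $ (something nonzero) precisely because $T$ is surjective (co-isometry) so $T^* = R$ is injective, giving $R e_0 = u \neq 0$ — but we need the pairing $\la x^0, u\ra$ nonzero, not just $u \neq 0$. I would therefore fall back on continuity: $\alpha_N(-1)$ converges (the poles of $\alpha_N$ stay in $\{\Re s < -1/2\}$, bounded away from $-1$, so evaluation at $-1$ is continuous along the sequence) to $\alpha(-1)$, and $\alpha(-1) = \lim_{s\to -1}(1+s)\la x^s, u\ra = \la 1, u\ra$ by Lemma \ref{lemon1} applied to the expansion — and $\la 1, u\ra \neq 0$ because $u \perp (1-H^*)\n$ forces $u$ to have nonzero component along $e_0$ in the Laguerre basis (as $e_0$ is the unique (up to scalar) vector annihilated by $1-H$, and $u$ is the wandering vector of the pure co-isometry $1-H$ compressed to $\n$). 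Thus for $N$ large $\alpha_N(-1) \neq 0$. \emph{The hard part is making the last sentence rigorous} — specifically, verifying that the wandering vector $u$ of the compressed co-isometry necessarily has $\la u, 1\ra \neq 0$, which I expect follows from the identity $\la u, 1\ra = \la u, e_0\ra$ together with $\n \ni u$ and the pure co-isometry structure on $\n$ meaning $\n^\perp = \m$ cannot contain $1$ unless... — this needs a clean lemma, likely a one-line consequence of Lemma \ref{lemon1} applied to $u$ itself: $1 = \norm{u}^2 = \norm{(1-H)u}^2 + |\la u, 1\ra|^2$, and $\norm{(1-H)u}^2 = 1$ would force $(1-H)u$ to be a unit vector in $\n$ orthogonal to the wandering space's first level, iterating to show $u$ generates a space containing all of $\n$ with an extra dimension — contradiction with $R$ being \emph{onto} $\n$. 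That contradiction is exactly the content I would write out.
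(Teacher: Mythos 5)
There is a genuine gap, and in fact the central reduction is based on a misidentification. You equate $\alpha_N(-1)$ (equivalently, the residue of $s \mapsto \la x^s, u_N\ra$ at $s=-1$) with the inner product $\la 1, u_N\ra$. These are different quantities: since $\la x^s, u_N\ra = \alpha_N(s)/(s+1)$, the inner product $\la 1, u_N\ra = \la x^0, u_N\ra$ equals $\alpha_N(0)$, not $\alpha_N(-1)$; the residue at $s=-1$ is instead (up to conjugation) the coefficient $c^1_1$ of the constant function in the monomial expansion \eqref{eqbe22} of $u_N$, which is a coefficient in a non-orthogonal expansion, not a pairing against $1$. Having made that identification, your ``key step'' becomes: show $\la u, 1\ra \neq 0$. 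But that statement is false in general. Whenever $k_0 \geq 1$, i.e.\ $e_0 = 1 \in \m$, the vector $u$ lies in $\n = \m^\perp$ and so $\la u, 1\ra = 0$; the simplest example is $\m = \C\cdot 1$, which is $H$-invariant, where $u = e_1 = 1 + \log x$ satisfies $\la u,1\ra = 0$ while $\alpha(s) = s/(s+1)$ is certainly not zero at $-1$. Your closing attempts (the norm identity $1 = \|(1-H)u\|^2 + |\la u,1\ra|^2$, and the continuity argument sending $\alpha_N(-1) \to \alpha(-1)$) inherit the same confusion and, as you acknowledge, do not close; moreover continuity would at best give the conclusion for large $N$, and pointwise convergence at $s=-1$, outside $\s$, is not justified by the weak convergence $u_N \to u$.

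The paper's proof goes a different way and makes essential use of the structure of Case (ii) itself: if $\alpha_N(-1)=0$ then $u_N$ lies in $\m(\{s_1,\dots,s_N\})$ where \emph{no exponent equals $0$}. Since $R_N$ is an isometry (Lemma \ref{lem58}), $u_N \perp (1-H^*)^k u_N$ and hence $u_N \perp (1-H)^k u_N$ for all $k\geq 1$; therefore $\la u_N, p(1-H)u_N\ra = 0$ for every polynomial $p$ with $p(0)=0$. Because none of the eigenvalues of $1-H$ on the monomials $x^{t_j}$ appearing in $u_N$ is $0$, one can choose such polynomials to peel off, one at a time, each generalized monomial $(\log x)^{r}x^{t_j}$, concluding that $u_N$ is orthogonal to all of $\m(\{s_1,\dots,s_N\})$ and hence to itself, forcing $u_N=0$, a contradiction. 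If you want to salvage your approach, the quantity to control is the coefficient of $x^0$ in $u_N$, not $\la u_N, 1\ra$, and the vanishing-at-$0$ polynomial trick is the mechanism that detects it.
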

\bp
Let us assume that we are in Case (ii) of  Lemma \ref{lem57}.
Let $t_j = - \bar\lambda_j - 1$. In the sequence $\{  s_1, \dots, s_N \}$ each $t_j$ appears with multiplicity $m_j$,
and no $t_j$ is $0$.
We have
\[
u_N =  \sum_{j=1}^p \sum_{r=1}^{m_j}  \bar c^r_j (- \log x)^{r-1} x^{t_j} .
\]
 Since $R_N$ is isometric by Lemma \ref{lem58},
we have $u_N$ is orthogonal to $(1-H^*)^k u_N$ for every $k \geq 1$, and hence $u_N$ is also
orthogonal to $(1-H)^k u_N$ for every $k \geq 1$.
For each $j \geq 1$, let $p_j^k$ be a polynomial that vanishes at  $0$, vanishes at $t_i$ to order $m_i$ if $i \neq j$, and
vanishes at $t_j$ to order $k$.
Since each such polynomial vanishes at zero, we have
\be
\label{eqbe24}
\la u_N , p_j^k(1-H)  u_N \ra \= 0.
\ee
Consider 
\[ p_j^{m_j} (1-H) u_N \= \bar  c_j^{m_j} x^{t_j} .
\]
By \eqref{eqbe24}, we conlcude that $u_N \perp x^{t_j}$.
Similarly $p_j^{m_j -1} (1-H) u_N$  equals $\bar c_j^{m_j} (\log x) x^{t_j}$ plus some multiple of $x^{t_j}$.
Therefore we conclude that $u_N$ is also orthogonal to $(\log x) x^{t_j}$. 
Continuing in this way, we conclude that $u_N$ is orthogonal to every function in $\m(\{  s_1, \dots , s_{N} \})$.
Since $u_N$ itself is in this space, we conclude that $u_N = 0$, a contradiction.
\ep

Let $\m_N = \m(\{  s_1, \dots , s_{N} \})$, in other words the space
$\m( \{ s_0,  s_1, \dots , s_{N} \})$ with the multiplicity at $0$ reduced by $1$.
Here is the final step.
\begin{lem}
\label{lem59}
The sequence $\m_N$ tends to $\m$.
\end{lem}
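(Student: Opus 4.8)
The plan is to show that $\m_N \to \m$ by using the characterization of $\m$ via the vector $u$ and of $\m_N$ via the vectors $u_N$. Recall that $u$ is the (normalized) projection of $e_{k_0}$ onto $\n = \m^\perp$, that $R : e_n \mapsto (1-H^*)^n u$ is an isometry of $\ltwo$ onto $\n$, and that its adjoint $T = R^*$ acts by $x^s \mapsto (1+s)\la x^s, u\ra x^s$. Correspondingly, $R_N : e_n \mapsto (1-H^*)^n u_N$ is an isometry (Lemma \ref{lem58}, Lemma \ref{lem60}) onto the space $\n_N := \overline{\vee}\{(H^*)^n u_N : n \ge 0\}$, and by the same argument used in Lemma \ref{lem53} the orthogonal complement $\m_N^\circ := \n_N^\perp = \overline{\vee}\{H^m u_N : m \ge 1\}$. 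The first order of business is to identify $\m_N^\circ$ with the generalized finite monomial space $\m(\{s_1,\dots,s_N\})$: since $u_N \in \m(\{s_0,s_1,\dots,s_N\})$ with $s_0 = 0$, and $H$ maps that $(N+1)$-dimensional space into itself, $\vee\{H^m u_N : m \ge 1\}$ is contained in an $H$-invariant subspace; a dimension count (using $\alpha_N(-1) \ne 0$ from Lemma \ref{lem59.1}, which forces the ``$0$'' exponent to be present in $u_N$ with full weight so that $H u_N$ loses exactly one dimension) shows equality. By Proposition \ref{prgms} and Corollary \ref{corgms}, it then suffices to prove $\m_N^\circ \to \m$ as closed subspaces.

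Next I would reduce the convergence of subspaces to the convergence $u_N \to u$ in $\ltwo$. Since $R$ and $R_N$ are isometries onto $\n$ and $\n_N$ respectively that send the fixed basis $e_n$ to $(1-H^*)^n u$ and $(1-H^*)^n u_N$, and since $(1-H^*)$ is a fixed isometry, one has for any $f = \sum a_n e_n$ that $\|Rf - R_N f\|^2 = \|\sum a_n (1-H^*)^n(u - u_N)\|^2$. The cross terms do not vanish in general, but a clean estimate is available: writing $P_\n$, $P_{\n_N}$ for the orthogonal projections, $P_\n = R R^*$ and $P_{\n_N} = R_N R_N^*$, and one can bound $\|P_\n - P_{\n_N}\|$ in operator norm by a constant times $\|u - u_N\|$, because $R^* - R_N^*$ sends $x^s$ to $(1+s)\la x^s, u - u_N\ra x^s$, an operator that commutes with $H$ and whose norm is controlled (via Theorem \ref{thmcl2} or directly) by $\|u-u_N\|$. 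Operator-norm convergence of the projections $P_{\m_N^\circ} = I - P_{\n_N} \to I - P_\n = P_\m$ immediately yields $\m_N^\circ \to \m$ in the sense of Definition \ref{defin1}: for $f \in \m$, $\mathrm{dist}(f, \m_N^\circ) = \|(I - P_{\m_N^\circ})f\| \to 0$, and conversely if $\mathrm{dist}(f,\m_N^\circ) \to 0$ then $\|(I - P_\m)f\| = \lim \|(I - P_{\m_N^\circ})f\| = 0$ so $f \in \m$.

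It therefore remains to prove $u_N \to u$ in $\ltwo$. This is where the choice of $C_N$ and $\alpha_N$ pays off. We have $\alpha_N(i) = C_N(i+1)\la u, x^i\ra$ for $0 \le i \le N$, while $\alpha(s) := (1+s)\la x^s, u\ra$ is the symbol of the co-isometry $T = R^*$, so $\alpha(i) = (i+1)\la x^i, u\ra = \overline{(i+1)\la u, x^i\ra}$. By Lemma \ref{lem57}(i), $\alpha_N(s) = (1+s)\la x^s, u_N\ra$ for all $s \in \s$. Thus $\la x^i, u - \tfrac{1}{C_N} u_N\ra$ would vanish for $i = 0,\dots,N$ if we rescaled; more carefully, $\la x^i, u\ra - \la x^i, u_N\ra = \la x^i, u\ra(1 - C_N) + (\text{correction})$, and since $C_N \to 1$ and the monomials $\{x^i : i \ge 0\}$ have dense span in $\ltwo$, the uniformly-bounded functions $u_N$ (all with $\|u_N\| = \|R_N e_0\| = 1$) have all their moments against $x^i$ converging to those of $u$; a weak-compactness argument gives $u_N \rightharpoonup u$ weakly, and then $\|u_N\| = \|u\| = 1$ upgrades weak convergence to norm convergence. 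The main obstacle is precisely this last step: one must be careful that $\la x^i, u_N\ra \to \la x^i, u\ra$ for each fixed $i$ (handling the factor $C_N \to 1$ and the fact that the defining relation for $\alpha_N$ only pins down $N+1$ moments, with $N$ growing), and that $\|u_N\| = 1$ exactly — which follows since $R_N$ is an isometry by Lemma \ref{lem58} and $u_N = R_N e_0$ up to the normalization built into \eqref{eqbe22}, so that weak convergence plus equality of norms forces strong convergence and hence $u_N \to u$.
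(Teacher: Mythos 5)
Your overall architecture (identify $\m_N$ with the orthogonal complement of $\mathrm{ran}\,R_N$, then deduce subspace convergence from convergence of the projections $P_{\m_N}=1-T_N^*T_N$ to $P_\m=1-T^*T$, driven by convergence of $u_N$ to $u$) matches the paper's, and your observation that $u_N\to u$ in \emph{norm} (weak convergence from the moment identities $\langle x^i,u_N\rangle=C_N\langle x^i,u\rangle$ plus $\|u_N\|=\|u\|=1$) is correct and even slightly stronger than what the paper records. But the pivotal step of your second paragraph is false: you cannot bound $\|P_\n-P_{\n_N}\|$ in operator norm by a constant times $\|u-u_N\|$, and indeed the projections do \emph{not} converge in operator norm. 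Two orthogonal projections at operator-norm distance less than $1$ have unitarily equivalent ranges (and complements), whereas $\m_N=\n_N^\perp$ is $N$-dimensional while $\m=\n^\perp$ is in general infinite-dimensional (e.g.\ $\m=\{f: f=0 \text{ a.e.\ on }[0,s]\}$); hence $\|P_{\m_N}-P_\m\|\ge 1$ for every $N$. The underlying error is the claim that the operator $S_v: x^s\mapsto (1+s)\langle x^s,v\rangle x^s$ has norm controlled by $\|v\|_{L^2}$: in the shift model furnished by the pure isometry $1-H^*$ (Proposition \ref{prbas1}), $S_v^*$ sends $e_n$ to $(1-H^*)^n v$, i.e.\ it is multiplication by the symbol of $v$, whose operator norm is the \emph{sup} norm of that symbol, not its $L^2$ norm. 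So norm convergence of $u_N$ buys you nothing at the level of operator-norm convergence of $T_N$ or of the projections.

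The repair is to settle for strong operator convergence, which is all that Definition \ref{defin1} requires and is exactly what your closing deduction actually uses. From $\langle x^i,u_N\rangle\to\langle x^i,u\rangle$ you get $T_Nx^s\to Tx^s$ for each monomial; since $\|T_N\|\le 1$ uniformly and the monomials span a dense subspace, $T_N\to T$ in SOT. Then $\langle T_N^*T_Nf,g\rangle=\langle T_Nf,T_Ng\rangle\to\langle Tf,Tg\rangle$, so $P_{\m_N}=1-T_N^*T_N\to P_\m$ in WOT, and a WOT-convergent sequence of orthogonal projections converges in SOT. Your final two implications (for $f\in\m$, $\mathrm{dist}(f,\m_N)=\|(1-P_{\m_N})f\|\to 0$, and conversely) only need this SOT convergence, so the argument closes. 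This is precisely the route the paper takes. Your first paragraph's identification of $\m_N$ with $\overline{\vee}\{H^mu_N:m\ge1\}$ by a dimension count using Lemma \ref{lem59.1} is in the right spirit, though you should spell out why $Hu_N$ drops exactly the one dimension at the exponent $0$; the paper instead argues via the orthogonality of $u_N$ to $p(1-H)\m(\{s_0,\dots,s_N\})$ for polynomials $p$ vanishing at $0$.
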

\bp
Let $t_j = - \bar \lambda_j - 1$, with $t_1 = 0$.
We have
\[
u_N =  \sum_{j=1}^p \sum_{r=1}^{m_j}  \bar c^r_j (- \log x)^{r-1} x^{t_j} .
\]
As in the proof of Lemma \ref{lem59.1}, we conclude that $u_N$ is orthogonal to 
$p(1-H) \m(  \{ s_0,  s_1, \dots , s_{N} \})$ for every polynomial $p$ that vanishes at $0$.
So $u_N$ is a constant multiple of the projection of $e_{m_1 -1} $ onto $\m_n^\perp$.

By Lemma \ref{lem53}, $T_N^*$ is an isometry from $\ltwo$ onto $\m_N^\perp$.
Therefore the projection $P_{\m_N}$ onto $\m_N$ is given by  $1 - T_N^* T_N$.
We have 
\beq
T_N : x^i &\ \mapsto \ & (i+1) \la x^i , u_N \ra x^i, \qquad 0 \leq i \leq N \\
&=& C_N 
(i+1) \la x^i , u\ra x^i,  \qquad 0 \leq i \leq N .
\eeq
As $N \to \infty$, we have $u_N \to u$ weakly and so $T_N \to T$ in SOT.
Therefore $P_{\m_N} \to P_{\m} = 1 - T^* T$ in WOT and hence also SOT (since a sequence of projections converges in the SOT if and only if it converges WOT).
\ep

\section{Open Question}

Let $1 < p < \infty$, and $p \neq 2$. The Hardy operator is bounded on $L^p[0,1]$, and has
$x^s$ as an eigenvector whenever $s \in \s_p = \{ s \in \C : \Re (s) > - \frac{1}{p} \}$.
Any space that is the limit of finite monomial spaces (with powers in $\s_p$) is therefore
invariant for $H$. Is every closed subspace of $L^p[0,1]$ that is invariant for $H$ of this form?

\bibliography{../references_uniform_partial}
\end{document}